\documentclass[11pt]{article}
\usepackage{amsmath, amssymb, amsthm, enumerate}

\usepackage{graphicx}

\newtheorem{theorem}{Theorem}[section]
\newtheorem{lemma}[theorem]{Lemma}
\newtheorem{corollary}[theorem]{Corollary}

\def\ints{{\mathbb Z}}
\def\cx{{\mathbb C}}
\def\cA{{\mathcal A}}
\def\cB{{\mathcal B}}
\def\cH{{\mathcal H}}
\def\cN{{\mathcal N}}
\def\cS{{\mathcal S}}
\def\yy#1#2{Y_{{#1},{#2}}} 
\def\yab{\yy ab}
\def\spn{\mathop{\mathrm{span}}\nolimits}

\def\snv{^{(-)}}
\def\snt{^{(-)T}}
\def\inv{^{-1}}
\def\Th{\Theta}
\def\th{\theta}

\def\thw{\Th_W}

\def\nom#1{\cN_{#1}}
\def\nomw{\nom{W}}

\def\one{{\bf 1}}
\def\qn1{{q^{n-1}}}

\title{Hamming Graphs in Nomura Algebras}
\author{
Ada Chan
\thanks{The first author gratefully acknowledges the
support from a JSPS postdoctoral fellowship and a NSERC
discovery grant.}\\
Department of Mathematics and Statistics\\
York University, Toronto, Ontario, Canada M3J 1P3\\
ssachan@yorku.ca\\
\and
Akihiro Munemasa\\
Graduate School of Information Sciences\\
Tohoku University,
Sendai, 980-8579, Japan\\
munemasa@math.is.tohoku.ac.jp
}
\begin{document}
\maketitle

\begin{abstract}
Let $\cA$ be an association scheme on $q\geq 3$ vertices.
We show that the Bose-Mesner algebra of 
the generalized Hamming scheme $\cH(n,\cA)$, for $n\geq 2$,
is not the Nomura algebra of any type~II matrix.

This result gives examples of formally self-dual Bose-Mesner algebras 
that are not the Nomura algebras of type~II matrices.
\end{abstract}
Keywords: Type II matrix, Nomura algebra, Hamming scheme, duality of association scheme\\
AMS Classification: 05E30

\section{Introduction}
A $v \times v$ matrix $W$ is a {\em type~II matrix} if 
\begin{equation}
\label{defTypeII}
\sum_{x=1}^v \frac{W(x,a)}{W(x,b)}=\delta_{a,b} v
\quad \text{for $a, b = 1,\ldots,v$.}
\end{equation}
Hadamard matrices and the character tables of finite abelian groups satisfy 
this condition.  Type II matrices also arise from combinatorial objects such
as symmetric designs, tight sets of equiangular lines and strongly regular graphs
\cite{TypeIIComb}.  

In \cite{MR99f:05125}, Nomura constructed the Bose-Menser algebra of an association scheme
from each type~II matrix $W$, hence another connection to combinatorics.
We call this algebra the {\em Nomura algebra of $W$}, and denote it by $\nomw$.
Jaeger, Matsumoto and Nomura \cite{MR99f:05125} 
showed that a type~II matrix $W$
belongs to $\nomw$ if and only if $cW$ is a spin model
for some non-zero scalar $c$.
Spin models give link invariants and they are difficult to find.
The ability to identify the Bose-Mesner algebras that are
the Nomura algebras of type~II matrices 
is a step towards the search of spin models.

We say two type~II matrices $W_1$ and $W_2$ are {\em type~II equivalent} if
\begin{equation*}
W_1 = P_1 D_1 W_2 D_2 P_2
\end{equation*}
for some invertible diagonal matrices 
$D_1$ and $D_2$ and permutation matrices $P_1$ and $P_2$.
Suzuki \cite{Suzuki} showed that if $\nomw$ contains the Hamming scheme $\cH(n,3)$, then
$W$ is type~II equivalent to the character table of the group $\ints_3^n$.

It is straightforward to show that $W_1$ and $W_2$ are type~II matrices
if and only if their Kronecker
product $W_1 \otimes W_2$ is a type~II matrix.
We generalize Suzuki's result and show that if $\nomw$ contains the 
adjacency matrix of the Hamming graph $H(n, q)$,
$n\geq 2$ and $q \geq 3$, then $W$ is type~II equivalent to 
\begin{equation*}
W_1 \otimes W_2 \otimes \cdots \otimes W_n
\end{equation*}
where $W_1, W_2, \ldots, W_n$ are $q\times q$ type~II matrices.
In this case, $\nomw$ is isomorphic to
\begin{equation*}
\nom{W_1} \otimes \nom{W_2} \otimes \cdots \otimes \nom{W_n}.
\end{equation*}
As a consequence, when $n\geq 2$ and $q\geq 3$,
the Bose-Mesner algebra of 
the Hamming scheme $\cH(n,q)$ and 
the Bose-Mesner algebra of the generalized Hamming scheme
$\cH(n,\cA)$, for any association scheme $\cA$ on $q$ vertices,
cannot be the Nomura algebras of type~II matrices.

If $W$ is type~II, then so is $W^T$.  In \cite{MR99f:05125},
Jaeger et al.\ showed that $\nomw$ and $\nom{W^T}$ are 
formally dual.  
If $\nomw=\nom{W^T}$, which includes the case where $W$ is symmetric, then
$\nomw$ is formally self-dual.  
The Hamming scheme $\cH(n,q)$ is a formally self-dual association scheme.
It also satisfies Delsarte's notion \cite{MR0384310}
of self-duality and the notion of hyper self-duality defined by Curtin and
Nomura \cite{MR1826951}.
We now have a family of association schemes satisfying 
all three notions of duality but are
not the Nomura algebras of type~II matrices.

In this paper, we use $I_v$ and $J_v$ to denote the $v\times v$ identity
matrix and
the matrix of all ones, respectively.  We use ${\bf 1}_v$ to denote the column vector of all
ones of length $v$.  We omit the subscript if the size or the length is clear.

\section{Nomura Algebras}
\label{SectionTypeII}

We recall type~II matrices and their Nomura algebras,  
see \cite{MR99f:05125} and \cite{MR98g:05158} for details. 

Given matrices $M$ and $N$ of the same order, their {\em Schur product},
$M\circ N$, is defined by
\begin{equation*}
(M \circ N)(i,j) = M(i,j)N(i,j)
\qquad \text{for all $i$ and $j$.}
\end{equation*}
If $M$ has no zero entry, we use $M\snv$ to denote the matrix whose $(i,j)$-entry is $M(i,j)^{-1}$.

A $v\times v$ matrix $W$ is type~II if 
$W\snt W=vI$.
The $v\times v$ matrix
\begin{equation*}
W = (t-1)I+J
\end{equation*}
is type~II if and only if $t$ is a root of the quadratic
$t^2+(v-2)t+1=0$.
This is the simplest spin model (after an
appropriate scaling), called the Potts model, and 
the associated link invariant is the Jones polynomial \cite{MR89m:57005}.

Let the vectors $e_1, \ldots, e_v$ be the standard basis of $\cx^v$.
Given a $v\times v$ type~II matrix $W$, we define $v^2$ vectors
\begin{equation*}
\yab = We_a \circ W\snv e_b
\qquad
\text{for $a, b = 1, \ldots, v$.}  
\end{equation*}
It follows from (\ref{defTypeII}) that $W$ is invertible and has no zero
entries.  Hence, for all $a$, the set
$\{\yab : b=1,\ldots,v\}$ is a basis for $\cx^v$.

The {\em Nomura algebra}, $\nomw$, of $W$ is the set of matrices
that have $\yab$ as eigenvectors for all $a, b$.   It follows immediately that 
$\nomw$ contains $I$ and it is closed under matrix multiplication.
By (\ref{defTypeII}), $J \yab= \delta_{a,b} v \yab$.
Hence $J \in \nomw$ and $\dim\nomw \geq 2$.

For each matrix $M\in \nomw$, let $\thw(M)$ be the $v\times v$ matrix whose
$(a,b)$-entry satisfies
\begin{equation*}
M \yab = \thw(M)(a,b) \yab
\qquad
\text{for $a, b=1, \ldots, v$.}   
\end{equation*}

The following results, obtained from Theorems~1 to 3 of {\cite{MR99f:05125}},
 are useful in subsequent sections.
\begin{theorem}
\label{ThmNom}
Let $W$ be a $v\times v$ type~II matrix.
Then $\nomw$ is closed under matrix multiplication, Schur product and
transpose.  It is commutative with respect to matrix multiplication.

Moreover $\thw(\nomw) = \nom{W^T}$, and
\begin{enumerate}
\item
$\thw(M_1M_2)=\thw(M_1)\circ\thw(M_2)$,
\item
$\thw(M_1\circ M_2) = \frac{1}{v} \thw(M_1)\thw(M_2)$,
\item
$\thw(M_1^T) =  \thw(M_1)^T$,
\end{enumerate}
for $M_1, M_2 \in \nomw$.
\qed
\end{theorem}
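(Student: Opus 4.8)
The plan is to derive everything from the defining relation $M\yab=\thw(M)(a,b)\yab$ together with a single orthogonality identity coming straight from the type~II condition~(\ref{defTypeII}). Since $\yy ca(x)\,\yab(x)=W(x,c)/W(x,b)$, summing over $x$ gives
\[
\yy ca^T\,\yab=v\,\delta_{b,c}\qquad\text{for all }a,b,c,
\]
and in particular $\yy ba^T\yab=v$. For each fixed $a$ this exhibits $\{\yab:b=1,\dots,v\}$ and $\{\tfrac1v\yy ba:b=1,\dots,v\}$ as dual bases of $\cx^v$, so any $u$ expands as $u=\frac1v\sum_b(\yy ba^Tu)\,\yab$. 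This expansion is the only tool I will use repeatedly.

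The matrix-product claims are immediate. If $M_1,M_2\in\nomw$ then each $\yab$ is a common eigenvector, hence an eigenvector of both $M_1M_2$ and $M_2M_1$ with eigenvalue $\thw(M_1)(a,b)\thw(M_2)(a,b)$; since the $\yab$ span for fixed $a$, we get $M_1M_2=M_2M_1\in\nomw$ and identity~1. For the transpose I would expand $M^T\yab$ in the basis $\{\yy ac\}_c$: the coefficient of $\yy ac$ is $\frac1v\yy ca^T M^T\yab=\frac1v(M\yy ca)^T\yab=\frac1v\thw(M)(c,a)\,\yy ca^T\yab=\thw(M)(c,a)\,\delta_{b,c}$, using that $\yy ca$ is itself one of the eigenvectors. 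Only $c=b$ survives, so $M^T\yab=\thw(M)(b,a)\,\yab$; thus $M^T\in\nomw$ and $\thw(M^T)=\thw(M)^T$, which is identity~3.

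The heart of the argument, and the step I expect to be the main obstacle, is the duality with $W^T$. Starting from the eigenvalue formula in the $x$-independent form $\thw(M)(p,q)=\frac{W(x,q)}{W(x,p)}\sum_y M(x,y)\frac{W(y,p)}{W(y,q)}$, I would let $Z_{a,b}=W^Te_a\circ(W^T)\snv e_b$ be the vectors attached to $W^T$, so $Z_{a,b}(x)=W(a,x)/W(b,x)$, and compute $\thw(M)\,Z_{a,b}$ directly. The key trick is to evaluate the $x$-independent formula at the special value $x=b$; the resulting column sum $\sum_q W(a,q)/W(y,q)$ then collapses to $v\,\delta_{a,y}$ by the type~II property of $W^T$, and one obtains $\thw(M)\,Z_{a,b}=v\,M(b,a)\,Z_{a,b}$. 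Hence $\thw(M)\in\nom{W^T}$ and $\Theta_{W^T}(\thw(M))=v\,M^T$, the inversion relation that drives all the remaining claims. This substitution is the only genuinely computational point; everything else is bookkeeping.

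With the inversion in hand the rest is formal. Identity~3 shows $\thw(\nomw)$ is transpose-closed, and applying the same lemma to $W^T$ gives the swapped relation $\thw(\Theta_{W^T}(N))=v\,N^T$ for $N\in\nom{W^T}$, so every such $N$ lies in $\thw(\nomw)$; therefore $\thw(\nomw)=\nom{W^T}$. Finally, for the Schur product I would set $N_i=\thw(M_i)\in\nom{W^T}$, apply the already-proved matrix-product identity to $W^T$ to get $\Theta_{W^T}(N_1N_2)=\Theta_{W^T}(N_1)\circ\Theta_{W^T}(N_2)=v^2(M_1\circ M_2)^T$, and then use that $\Theta_{W^T}$ maps $\nom{W^T}$ into $\nomw$ together with transpose closure to conclude $M_1\circ M_2\in\nomw$. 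Applying $\thw$ to the same identity and invoking the swapped inversion yields $\thw(M_1\circ M_2)=\frac1v\thw(M_1)\thw(M_2)$, which is identity~2.
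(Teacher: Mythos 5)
Your proposal is correct, and every step I checked goes through; in particular the key duality computation is right: evaluating the $x$-independent eigenvalue formula at $x=b$ and using that $W^T$ is also type~II (so that $\sum_q W(a,q)/W(y,q)=v\delta_{a,y}$) does yield $\thw(M)Z_{a,b}=vM(b,a)Z_{a,b}$, hence $\Th_{W^T}(\thw(M))=vM^T$, and the remaining claims (equality $\thw(\nomw)=\nom{W^T}$, Schur-closure, and identity~2) follow formally exactly as you say. There is nothing in the paper to compare against: the authors state this theorem without proof, citing Theorems~1--3 of Jaeger, Matsumoto and Nomura. Your argument is essentially a faithful reconstruction of their proofs --- the biorthogonality $Y_{c,a}^TY_{a,b}=v\delta_{b,c}$ giving dual bases, the transpose identity via expansion in the dual basis, the duality via the ``exchange'' substitution, and Schur-closure deduced from product-closure of the dual algebra --- so it supplies precisely the content the paper outsources to the reference.
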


\begin{lemma}
\label{LemmaEquiv}
Let $W$ be a type~II matrix.  For any invertible diagonal matrices $D_1$ and $D_2$
and for any permutation matrices $P_1$ and $P_2$,
\begin{equation*}
\nom{P_1D_1 W D_2P_2} = P_1 \nomw P_1^T.  
\end{equation*}
\qed
\end{lemma}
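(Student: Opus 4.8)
The plan is to verify the identity one elementary factor at a time, tracking the effect of each factor on the vectors $\yab$ and exploiting the fact that membership in a Nomura algebra is insensitive to rescaling or relabelling of these vectors. The guiding observation is that a matrix $M$ has $\yab$ as an eigenvector for every pair $(a,b)$ if and only if it has $c_{a,b}\yab$ as an eigenvector for every pair, for any nonzero scalars $c_{a,b}$, and if and only if it has $\yy{\pi(a)}{\pi(b)}$ as an eigenvector for every pair, for any permutation $\pi$; in each case the \emph{set} of vectors required to be eigenvectors is unchanged. I would therefore first show that the factors $D_1$, $D_2$ and $P_2$ each leave $\nomw$ unchanged, isolating $P_1$ as the only factor that genuinely transforms the algebra.

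First I would treat right multiplication, writing $Y'_{a,b}$ for the vectors built from $W'$ in place of $W$. For $W'=WD_2$ with $D_2=\mathrm{diag}(d_1,\dots,d_v)$ one has $W'e_a=d_aWe_a$ and, since $(WD_2)\snv = W\snv D_2\inv$, also $W'\snv e_b=d_b\inv W\snv e_b$; hence $Y'_{a,b}=(d_a/d_b)\yab$, a rescaling. For $W'=WP_2$ with $P_2e_a=e_{\pi(a)}$ one has $W'e_a=We_{\pi(a)}$ and $W'\snv e_b=W\snv e_{\pi(b)}$, so $Y'_{a,b}=\yy{\pi(a)}{\pi(b)}$, a relabelling. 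By the observation above, $\nom{WD_2}=\nom{WP_2}=\nomw$.

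Next I would treat left multiplication. For $W'=D_1W$ with $D_1=\mathrm{diag}(c_1,\dots,c_v)$, the $i$th coordinate of $W'e_a$ is $c_i(We_a)_i$, while that of $W'\snv e_b$ is $c_i\inv(W\snv e_b)_i$ because $(D_1W)\snv=D_1\inv W\snv$; the scalars cancel in the Schur product, giving $Y'_{a,b}=\yab$ exactly, so $\nom{D_1W}=\nomw$. For $W'=P_1W$, the identities $(P_1W)\snv=P_1W\snv$ and $P_1u\circ P_1w=P_1(u\circ w)$ yield $Y'_{a,b}=P_1\yab$. Since $M(P_1\yab)=\lambda(P_1\yab)$ is equivalent to $(P_1^TMP_1)\yab=\lambda\yab$, a matrix $M$ lies in $\nom{P_1W}$ exactly when $P_1^TMP_1\in\nomw$; that is, $\nom{P_1W}=P_1\nomw P_1^T$.

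Finally I would combine these, noting that each intermediate matrix is again type~II. Since right multiplication by $D_2$ and $P_2$ and left multiplication by $D_1$ all fix the Nomura algebra, we have $\nom{D_1WD_2P_2}=\nomw$, and therefore
\begin{equation*}
\nom{P_1D_1WD_2P_2}=P_1\,\nom{D_1WD_2P_2}\,P_1^T=P_1\,\nomw\,P_1^T .
\end{equation*}
The only point demanding care is the behaviour of the entrywise inverse under the monomial factors---the identities $(WD_2)\snv=W\snv D_2\inv$, $(D_1W)\snv=D_1\inv W\snv$ and $(P_1W)\snv=P_1W\snv$---since these are precisely what force the diagonal scalars to cancel and let the permutation pass through the Schur product. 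Once they are established the remaining bookkeeping is routine.
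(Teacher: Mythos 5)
Your proof is correct. The paper states this lemma without proof (deferring to Jaeger--Matsumoto--Nomura), and your factor-by-factor verification --- tracking how each of $D_1$, $D_2$, $P_1$, $P_2$ transforms the vectors $\yab$ via the identities $(WD_2)\snv=W\snv D_2\inv$, $(D_1W)\snv=D_1\inv W\snv$, $(P_1W)\snv=P_1W\snv$ --- is exactly the standard argument the authors are implicitly relying on.
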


An {\em association scheme} on $v$ elements with $n$ classes is a set of $v\times v$ $01$-matrices
\begin{equation*}
\cA = \{ A_0, A_1, \ldots, A_n\}
\end{equation*}
satisfying
\begin{enumerate}
\item
$A_0=I$.
\item
$\sum_{i=0}^n A_i = J$.
\item
$A_i^T \in \cA$ for $i=0,\ldots, n$.
\item
$A_iA_j$ lies in the span of $\cA$ for $i, j = 0, \ldots, n$.
\item
$A_iA_j=A_jA_i$ for $i, j=0, \ldots, n$.
\end{enumerate}
The simplest association scheme is $\{I, J-I\}$, called the {\em trivial 
association scheme}.

The span of an association scheme over $\cx$ is called its {\em Bose-Mesner algebra}.
The Bose-Mesner algebra of $\cA$ is closed under matrix multiplication, Schur product and transpose.  
It is commutative with respect to matrix multiplication, and it contains $I$ and $J$.   
Conversely, any algebra satisfying these conditions is the Bose-Mesner algebra of an association
scheme \cite{MR1002568}.

A {\em formal duality} between two Bose-Mesner algebras $\cB_1$ and $\cB_2$
is an invertible linear map $\Th:\cB_1\rightarrow\cB_2$ satisfying
$\Th(MN)=\Th(M)\circ\Th(N)$ and $\Th(M\circ N)=\frac{1}{v}\Th(M)\Th(N)$.
When $\cB_1=\cB_2$ and $\Th^2(M)=vM^T$, we say $\cB_1$ is 
{\em formally self-dual}.

\begin{theorem}
If $W$ is a type~II matrix, then $\nomw$
and $\nom{W^T}$ are a formally dual pair of
Bose-Mesner algebras.
\qed
\end{theorem}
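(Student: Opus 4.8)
The plan is to show that the map $\thw\colon\nomw\to\nom{W^T}$ is itself the required formal duality, since Theorem~\ref{ThmNom} has already packaged almost everything we need. First I would confirm that both $\nomw$ and $\nom{W^T}$ are genuinely Bose-Mesner algebras. By Theorem~\ref{ThmNom}, $\nomw$ is closed under matrix multiplication, Schur product and transpose, and is commutative under matrix multiplication; combined with the earlier observation that $I,J\in\nomw$, the characterization of Bose-Mesner algebras from \cite{MR1002568} applies, so $\nomw$ is a Bose-Mesner algebra. Since $W^T$ is again type~II, the identical argument applied to $W^T$ shows $\nom{W^T}$ is a Bose-Mesner algebra as well. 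This secures the two objects between which the duality is to hold.

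Next I would verify that $\thw$ is a linear bijection from $\nomw$ onto $\nom{W^T}$. Linearity is immediate from the defining property: the entry $\thw(M)(a,b)$ is the eigenvalue of $M$ on the common eigenvector $\yab$, and for a fixed eigenvector eigenvalues add and scale linearly, whence $\thw(c_1M_1+c_2M_2)=c_1\thw(M_1)+c_2\thw(M_2)$. Surjectivity onto $\nom{W^T}$ is precisely the assertion $\thw(\nomw)=\nom{W^T}$ in Theorem~\ref{ThmNom}. For injectivity, suppose $\thw(M)=0$; then $M\yab=0$ for every $a,b$, and because the vectors $\{\yab:b=1,\ldots,v\}$ form a basis of $\cx^v$, this forces $M=0$. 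Hence $\thw$ is invertible.

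Finally, the two conditions in the definition of a formal duality are exactly parts~1 and~2 of Theorem~\ref{ThmNom}, namely $\thw(M_1M_2)=\thw(M_1)\circ\thw(M_2)$ and $\thw(M_1\circ M_2)=\frac{1}{v}\thw(M_1)\thw(M_2)$. Assembling the three steps, $\thw$ is an invertible linear map between two Bose-Mesner algebras satisfying both multiplicative identities, which is precisely the definition of a formally dual pair.

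I do not anticipate a genuine obstacle, as Theorem~\ref{ThmNom} supplies the hard algebraic identities; the only points requiring care are checking that the domain and codomain really are Bose-Mesner algebras (and not merely algebras with the listed closure properties) and confirming the injectivity of $\thw$. Were I to seek a self-contained argument, I would instead derive the Bose-Mesner structure of $\nom{W^T}$ directly from the fact that $\thw$ interchanges matrix multiplication with Schur product, but invoking Theorem~\ref{ThmNom} for $W^T$ is the cleaner route.
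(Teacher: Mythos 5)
Your proof is correct, and since the paper states this theorem with no proof at all (it is quoted as a known result of Jaeger, Matsumoto and Nomura, just as Theorem~\ref{ThmNom} is), your assembly of it from the properties listed in Theorem~\ref{ThmNom} --- the Bose-Mesner characterization for $\nomw$ and $\nom{W^T}$, linearity and bijectivity of $\thw$ via $\thw(\nomw)=\nom{W^T}$ and the basis property of the vectors $\yab$, and the two product identities --- is exactly the intended justification. No gaps.
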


\section{Products}
Suppose $W_1$ and $W_2$ are type~II matrices then so is their Kronecker product $W_1\otimes W_2$.
Proposition~7 of {\cite{MR99f:05125}} determines the Nomura algebra of
$W_1\otimes W_2$.
\begin{lemma}
\label{LemmaTensor}
If $W_1$ and $W_2$ are type~II matrices, then
\begin{equation*}
\nom{W_1 \otimes W_2} = \nom{W_1} \otimes \nom{W_2}.
\end{equation*}
\qed
\end{lemma}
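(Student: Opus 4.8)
The plan is to prove the two inclusions separately. The inclusion $\nom{W_1}\otimes\nom{W_2}\subseteq\nom{W_1\otimes W_2}$ is formal, while the reverse inclusion is obtained by a separation-of-variables computation that pins down the eigenvalue function of an arbitrary element of $\nom{W_1\otimes W_2}$ and exhibits it explicitly as a tensor. First I would record how the defining vectors behave under Kronecker products. Writing $W=W_1\otimes W_2$ and indexing the standard basis of $\cx^{v_1v_2}$ by pairs, so that $e_{(a_1,a_2)}=e_{a_1}\otimes e_{a_2}$, the elementary identities $(W_1\otimes W_2)(e_{a_1}\otimes e_{a_2})=W_1e_{a_1}\otimes W_2e_{a_2}$, $(W_1\otimes W_2)\snv=W_1\snv\otimes W_2\snv$, and $(M_1\otimes M_2)\circ(N_1\otimes N_2)=(M_1\circ N_1)\otimes(M_2\circ N_2)$ combine to give
\[
Y_{(a_1,a_2),(b_1,b_2)}=Y^{(1)}_{a_1,b_1}\otimes Y^{(2)}_{a_2,b_2},
\]
where $Y^{(i)}$ denotes the vectors built from $W_i$. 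Thus the eigenvector basis attached to $W$ is exactly the set of Kronecker products of the two smaller eigenvector bases. I note for later use that each $Y^{(i)}_{a_i,b_i}$, being a column of $W_i$ Schur-multiplied with a column of $W_i\snv$, has no zero entry.

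The inclusion $\nom{W_1}\otimes\nom{W_2}\subseteq\nom{W_1\otimes W_2}$ is then immediate: if $M_1\in\nom{W_1}$ and $M_2\in\nom{W_2}$ then $M_1\otimes M_2$ sends $Y^{(1)}_{a_1,b_1}\otimes Y^{(2)}_{a_2,b_2}$ to $\Theta_{W_1}(M_1)(a_1,b_1)\,\Theta_{W_2}(M_2)(a_2,b_2)$ times itself, so every $Y_{(a_1,a_2),(b_1,b_2)}$ is an eigenvector of $M_1\otimes M_2$. Hence $M_1\otimes M_2\in\nom{W_1\otimes W_2}$, and since the latter is a linear space, the whole span $\nom{W_1}\otimes\nom{W_2}$ lies inside it.

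For the reverse inclusion I would take an arbitrary $M\in\nom{W_1\otimes W_2}$ and show that its eigenvalue function factors through the two smaller algebras. Write $M$ in $v_1\times v_1$ block form $M=(M_{pr})$ with each $M_{pr}$ of size $v_2$, and abbreviate $u=Y^{(1)}_{a_1,b_1}$, $w=Y^{(2)}_{a_2,b_2}$, and $\mu=\Theta_{W}(M)((a_1,a_2),(b_1,b_2))$. Matching the $p$-th block of $M(u\otimes w)=\mu\,(u\otimes w)$ gives $\sum_r u_r M_{pr}w=\mu\,u_p w$; since $u_p\neq 0$, the $v_2\times v_2$ matrix $N_p:=\frac{1}{u_p}\sum_r u_r M_{pr}$ depends only on $(a_1,b_1)$ and $p$, and satisfies $N_p w=\mu w$. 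Fixing $(a_1,b_1)$ and $p$ and letting $(a_2,b_2)$ vary, every $Y^{(2)}_{a_2,b_2}$ is an eigenvector of the single matrix $N_p$, so $N_p\in\nom{W_2}$ and $\mu=\Theta_{W_2}(N_p)(a_2,b_2)$. Because an element of $\nom{W_2}$ is scalar on each of its common eigenspaces, $\mu$ depends on $(a_2,b_2)$ only through the common eigenspace of $\nom{W_2}$ containing $Y^{(2)}_{a_2,b_2}$; the symmetric argument, using the $v_2\times v_2$ blocks and the nonvanishing of $w$, does the same for $(a_1,b_1)$. Writing $E^{(1)}_s$ and $E^{(2)}_t$ for the primitive idempotents of the Bose--Mesner algebras $\nom{W_1}$ and $\nom{W_2}$, this shows $\mu$ is constant—say $\mu_{st}$—on each subspace $\mathrm{ran}(E^{(1)}_s)\otimes\mathrm{ran}(E^{(2)}_t)$. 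Then $M$ and $\sum_{s,t}\mu_{st}\,E^{(1)}_s\otimes E^{(2)}_t$ act diagonally with identical eigenvalues on the basis $\{Y_{a,b}:b=1,\dots,v_1v_2\}$ for any fixed $a$, hence coincide, and the right-hand side lies in $\nom{W_1}\otimes\nom{W_2}$.

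The step I expect to carry the weight is the separation of variables in the third paragraph: it is what converts the single eigenvalue condition on $\cx^{v_1v_2}$ into the statement that the partial operators $N_p$ belong to $\nom{W_2}$. This conversion goes through \emph{only} because the vectors $Y^{(i)}_{a_i,b_i}$ have no zero entries, which is precisely what legitimises dividing by $u_p$ to form $N_p$; without that nonvanishing the partial operators would not be defined and the argument would stall.
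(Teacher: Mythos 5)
Your proof is correct. Note first that the paper itself gives no argument here: Lemma~\ref{LemmaTensor} is quoted verbatim as Proposition~7 of Jaeger--Matsumoto--Nomura \cite{MR99f:05125}, so there is no in-paper proof to compare against. Your argument supplies a complete, self-contained derivation. The factorization $Y_{(a_1,a_2),(b_1,b_2)}=Y^{(1)}_{a_1,b_1}\otimes Y^{(2)}_{a_2,b_2}$ and the resulting easy inclusion $\nom{W_1}\otimes\nom{W_2}\subseteq\nom{W_1\otimes W_2}$ are exactly as one expects. The substantive content is your reverse inclusion, and it holds up: the partial operators $N_p=\tfrac{1}{u_p}\sum_r u_r M_{pr}$ are well defined because the $Y$-vectors of a type~II matrix have no zero entries, they depend only on $(a_1,b_1)$ and $p$, and they admit every $Y^{(2)}_{a_2,b_2}$ as an eigenvector, hence lie in $\nom{W_2}$; the constancy of eigenvalues of elements of $\nom{W_2}$ on the ranges of its primitive idempotents then gives the separation of variables, and the two one-sided invariances combine (by a two-step chain) to show $\mu$ depends only on the pair of eigenspaces $(s,t)$, after which comparison with $\sum_{s,t}\mu_{st}E^{(1)}_s\otimes E^{(2)}_t$ on the eigenvector basis closes the argument. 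One cosmetic remark: in the ``symmetric argument'' the relevant partial operators are $v_1\times v_1$ (obtained by grouping entries of $M$ by the second index), not ``$v_2\times v_2$ blocks''; alternatively you can avoid redoing the computation by conjugating with the commutation permutation $P$ satisfying $W_2\otimes W_1=P(W_1\otimes W_2)P^T$ and invoking Lemma~\ref{LemmaEquiv}. Neither point affects correctness.
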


Hosoya and Suzuki 
\cite{MR1958007}
studied the structure of $W$ if $J \otimes I$ belongs to $\nomw$.
In \cite{Suzuki},  Suzuki showed that if 
$I\otimes J$ and $J\otimes I$ belong to $\nomw$, 
then $W$ is type~II equivalent to the Kronecker product of
two type~II matrices.
We tailor Theorem~1.2 of \cite{Suzuki} to Lemma~\ref{LemSuzuki} and
Theorem~\ref{ThmProduct}
by showing the type~II equivalence explicitly, for later use.

For the remainder of this section, we assume that
$W$ is an $mn\times mn$ type~II matrix and
\begin{equation*}
I_n \otimes J_m, J_n\otimes I_m \in \nomw.
\end{equation*}
For a vector $u$ of length $mn$, we use $u[i]$ to denote its $i$-th block of length $m$ for $i=1,\dots,n$.
We also denote by $W[i,j]$ the $m\times m$ submatrix located
at the $(i,j)$-block of $W$.

\begin{lemma}
\label{LemmaEquiv12}
For $1\leq a,b\leq mn$, the following statements hold.
\begin{enumerate}
\item
$\thw(I_n\otimes J_m)(a,b)=m$ if and only if
\begin{equation*}
\yab[i]\in\spn(\one_m) \quad \text{for all $i=1,\dots,n$,} 
\end{equation*}
or equivalently,
$We_a[i]\in\spn(We_b[i])$ for all $i=1,\dots,n$.
\item
$\thw(J_n\otimes I_m)(a,b)=n$ if and only if
\begin{equation*}
\yab[i]=\yab[1] \quad \text{for all $i=1,\dots,n$,}
\end{equation*}
or equivalently,
\begin{equation*}
(We_a[i])\circ(W\snv e_a[1])=(We_b[i])\circ(W\snv e_b[1])
\quad \text{for all $i=1,\dots,n$.}
\end{equation*}
\end{enumerate}
\end{lemma}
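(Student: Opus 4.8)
The plan is to convert each eigenvalue condition into a blockwise statement about the vectors $\yab$, and then re-express that statement in terms of the columns of $W$ and $W\snv$. Throughout I will use that a type~II matrix has no zero entries, so every entry of $\yab=We_a\circ W\snv e_b$ is nonzero; in particular, on each block $\yab[i]_k=We_a[i]_k/We_b[i]_k$, and passing between equalities of Schur products and equalities of ratios is reversible.

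For part~1 the key observation is the blockwise action of $I_n\otimes J_m$: for a vector $u$ of length $mn$, the $i$-th block of $(I_n\otimes J_m)u$ is $J_m u[i]=(\one_m^T u[i])\one_m$. Since $\yab$ is an eigenvector of $I_n\otimes J_m\in\nomw$, I would write out the eigenvalue equation block by block. Imposing the eigenvalue $m$ gives $(\one_m^T\yab[i])\one_m=m\,\yab[i]$ for every $i$, which says exactly that $\yab[i]\in\spn(\one_m)$; conversely, if each $\yab[i]\in\spn(\one_m)$ then $J_m$ acts on it as multiplication by $m$, so the eigenvalue is $m$. This gives the first equivalence. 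For the second, I use $\yab[i]=We_a[i]\circ W\snv e_b[i]$, whose $k$-th entry is $We_a[i]_k/We_b[i]_k$; this block lies in $\spn(\one_m)$ precisely when all these ratios coincide, i.e.\ when $We_a[i]$ is a scalar multiple of $We_b[i]$, which is the condition $We_a[i]\in\spn(We_b[i])$.

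For part~2 the analogous fact is that every block of $(J_n\otimes I_m)u$ equals $\sum_{j=1}^n u[j]$. Imposing eigenvalue $n$ on $\yab$ yields $\sum_{j=1}^n\yab[j]=n\,\yab[i]$ for all $i$, forcing each block to equal $\tfrac1n\sum_j\yab[j]$ and hence $\yab[i]=\yab[1]$ for every $i$; the converse is again immediate. To reach the stated reformulation I would compare entries: $\yab[i]=\yab[1]$ reads $We_a[i]_k/We_b[i]_k=We_a[1]_k/We_b[1]_k$, and dividing through rearranges this to $We_a[i]_k/We_a[1]_k=We_b[i]_k/We_b[1]_k$, which is exactly $(We_a[i])\circ(W\snv e_a[1])=(We_b[i])\circ(W\snv e_b[1])$.

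I do not expect a genuine obstacle here, as the argument is essentially bookkeeping. The two points to handle with care are the correct blockwise interpretation of the two Kronecker products and the reversibility of the entrywise rearrangements (which rests on the absence of zero entries). It may also be worth recording that $I_n\otimes J_m$ and $J_n\otimes I_m$ have only the eigenvalues $\{0,m\}$ and $\{0,n\}$ respectively, since this is what makes singling out the values $m$ and $n$ the natural dichotomy for $\thw$ on these matrices.
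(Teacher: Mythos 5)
Your argument is correct and is exactly the routine blockwise verification the paper has in mind when it dismisses this lemma's proof as ``Straightforward.'' You correctly identify the block action of the two Kronecker products, use the absence of zero entries to make the entrywise rearrangements reversible, and note that $m$ and $n$ are the only nonzero eigenvalues of $I_n\otimes J_m$ and $J_n\otimes I_m$, so there is nothing to add.
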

\begin{proof}
Straightforward.
\end{proof}

\begin{lemma}\label{LemmaWP}
There exists a permutation matrix $P$ such that 
\begin{equation}\label{Eqn_WP}
\Th_{WP}(I_n \otimes J_m) = m (J_n\otimes I_m)
\quad \text{and} \quad
\Th_{WP}(J_n \otimes I_m) = n (I_n\otimes J_m).
\end{equation}
\end{lemma}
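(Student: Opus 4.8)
The plan is to reduce the statement to a combinatorial fact about two $01$-matrices and then build the permutation explicitly. First I would record how right multiplication by a permutation acts on $\thw$. Writing $Pe_a=e_{\pi(a)}$, one checks $(WP)e_a\circ(WP)\snv e_b=Y_{\pi(a),\pi(b)}$, so the eigenvector labels are merely permuted. By Lemma~\ref{LemmaEquiv}, $\nom{WP}=\nomw$, and comparing eigenvalues gives $\Th_{WP}(M)=P^T\thw(M)P$ for every $M\in\nomw$. Hence (\ref{Eqn_WP}) is equivalent to producing a single permutation matrix $P$ with
\[
P^T\thw(I_n\otimes J_m)P=m(J_n\otimes I_m)\quad\text{and}\quad P^T\thw(J_n\otimes I_m)P=n(I_n\otimes J_m).
\]

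Next I would compute the two images. Since the entries of $\thw(M)$ are eigenvalues of $M$, and $I_n\otimes J_m$, $J_n\otimes I_m$ have eigenvalues $\{0,m\}$ and $\{0,n\}$, I may write $\thw(I_n\otimes J_m)=mC$ and $\thw(J_n\otimes I_m)=nD$ for symmetric $01$-matrices $C,D$ (symmetry from Theorem~\ref{ThmNom}(3)), so the target becomes $P^TCP=J_n\otimes I_m$ and $P^TDP=I_n\otimes J_m$. Feeding the identities $(I_n\otimes J_m)\circ(I_n\otimes J_m)=I_n\otimes J_m$, $(J_n\otimes I_m)\circ(J_n\otimes I_m)=J_n\otimes I_m$, $(I_n\otimes J_m)(J_n\otimes I_m)=J$ and $(I_n\otimes J_m)\circ(J_n\otimes I_m)=I$ through parts (1) and (2) of Theorem~\ref{ThmNom}, together with $\thw(I)=J$ and $\thw(J)=vI$, yields
\[
C^2=nC,\qquad D^2=mD,\qquad C\circ D=I,\qquad CD=J.
\]

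Then I would read off the combinatorial meaning. As a symmetric $01$-matrix with unit diagonal (the diagonal coming from $C\circ D=I$) satisfying $C^2=nC$, the matrix $C$ has every row sum equal to $n$ and forces adjacent indices to share identical closed neighbourhoods; hence $C(a,b)=1$ is an equivalence relation partitioning the $mn$ indices into $m$ cliques of size $n$. Symmetrically $D$ gives a partition into $n$ cliques of size $m$. Now $C\circ D=I$ says each $C$-class meets each $D$-class in at most one index, while $CD=J$ says each such intersection is nonempty; so sending an index to the pair consisting of its $D$-class and its $C$-class is a bijection onto $[n]\times[m]$. Letting $P$ realize this bijection in the Kronecker ordering, the $C$-equivalence becomes equality of the second coordinate and the $D$-equivalence equality of the first, giving $P^TCP=J_n\otimes I_m$ and $P^TDP=I_n\otimes J_m$, as required.

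The main obstacle will be the simultaneity. Each of $C$ and $D$ is individually conjugate to the appropriate Kronecker matrix by its union-of-cliques structure, but the content of the lemma is that \emph{one} permutation serves both. This is exactly what the pair $CD=J$ and $C\circ D=I$ supplies, by forcing the two partitions to interlock as the rows and columns of an $n\times m$ grid. Verifying that these two relations genuinely force the grid, equivalently that each $C$-class meets each $D$-class in exactly one index, is the step I would handle most carefully.
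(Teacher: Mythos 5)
Your proof is correct, and it reaches the same endgame as the paper's --- two interlocking partitions of the index set $\{1,\dots,mn\}$, one into $m$ classes of size $n$ and one into $n$ classes of size $m$, meeting pairwise in exactly one point and hence forming a grid that a single permutation straightens into the standard Kronecker blocks --- but you arrive at that picture by a genuinely different route. The paper defines the two relations exactly as you do (via the entries of $\thw(I_n\otimes J_m)$ and $\thw(J_n\otimes I_m)$ attaining the nonzero eigenvalue), but establishes their structure from the eigenvector side: the class size $n$ comes from counting how many of the basis vectors $\yy{a}{1},\dots,\yy{a}{mn}$ can lie in the $m$-eigenspace of $I_n\otimes J_m$ (whose multiplicity is $n$), while transitivity and the fact that $a\sim_1 b$ and $a\sim_2 b$ force $a=b$ come from the explicit block descriptions of $\yab$ in Lemma~\ref{LemmaEquiv12}. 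You instead work entirely in the image of $\thw$: the identities $C^2=nC$, $D^2=mD$, $C\circ D=I$, $CD=J$, obtained by pushing the evident product and Schur-product relations among $I$, $J$, $I_n\otimes J_m$, $J_n\otimes I_m$ through Theorem~\ref{ThmNom} together with $\thw(I)=J$ and $\thw(J)=vI$, already force the union-of-cliques decompositions and the grid. This is a clean, self-contained alternative that never touches Lemma~\ref{LemmaEquiv12}; the paper's choice is natural in context only because Lemma~\ref{LemmaEquiv12} is needed again, at the level of blocks of eigenvectors, in the proof of Lemma~\ref{LemSuzuki}. Two minor remarks: your conjugation formula $\Th_{WP}(M)=P^T\thw(M)P$ is exactly the step the paper leaves implicit, so spelling it out is welcome; and the identity $CD=J$ is not strictly needed for the ``exactly one'' claim you flag as delicate, since $m$ cliques of size $n$ against $n$ cliques of size $m$ with pairwise intersections of size at most one already force every intersection to be a singleton by counting --- though having $CD=J$ makes that step immediate.
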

\begin{proof}
For $1\leq a,b \leq mn$, we write $a\sim_1 b$ when
$\thw(I_n\otimes J_m)(a,b)=m$,
and $a\sim_2 b$ when $\thw(J_n\otimes I_m)(a,b)=n$.
Since $\{\yy{a}{1},\ldots,\yy{a}{mn}\}$ is a basis of $
\cx^{mn}$
consisting of eigenvectors of $I_n\otimes J_m$
which has eigenvalue $m$ with multiplicity $n$, 
$|\{b\mid a\sim_1 b\}|=n$, for each given $a$.
It follows from Lemma~\ref{LemmaEquiv12} that
$\sim_1$ is an equivalence relation
with $m$ equivalence classes of size $n$.
Similarly, for each given $a$,
$|\{b\mid a\sim_2 b\}|=m$, and
Lemma~\ref{LemmaEquiv12} implies that
$\sim_2$ is an equivalence relation with $n$ equivalence classes of size $m$.

Furthermore, Lemma~\ref{LemmaEquiv12} implies that
$a\sim_1 b$ and $a\sim_2 b$ 
occur simultaneously
only when $a=b$.
Hence an equivalence class of $\sim_1$ meets every equivalence class of $\sim_2$
in exactly one element, and
there exists a permutation matrix $P$ so that
the equivalence classes of $\sim_1$ and $\sim_2$ defined for $WP$ are
\begin{equation*}
\{h, m+h, 2m+h, \ldots, (n-1)m+h\}
\quad
\text{for $h=1, \ldots, m$} 
\end{equation*}
and 
\begin{equation*}
\{rm+1, rm+2, \ldots, rm+m \}
\qquad
\text{for $r=0, \ldots, n-1$,}
\end{equation*}
respectively.
By Lemma~\ref{LemmaEquiv}, $\nomw = \nom{WP}$.
We conclude that $\nom{WP}$ contains $I_n\otimes J_m$ and $J_n\otimes I_m$,
and (\ref{Eqn_WP}) holds.
\end{proof}

We say a type~II matrix is {\sl normalized} if all entries in its first row and
its first column are $1$.
Given any type~II matrix $W$, there exists invertible diagonal matrices $D$
and $D'$ such that $W'=DWD'$ is normalized.
By Lemma~\ref{LemmaEquiv}, we have $\nom{W'}=\nomw$.
Note that the eigenvector $W'e_a \circ {W'}\snv e_b$ is a scalar multiple
of $We_a\circ W\snv e_b$.
We conclude that $\Th_{W'}(M) = \thw(M)$ for all $M\in \nomw$.
\begin{lemma}
\label{LemSuzuki}
Suppose $W$ is normalized
and
\begin{equation*}
\thw(I_n \otimes J_m) = m (J_n\otimes I_m)
\quad \text{and} \quad
\thw(J_n \otimes I_m) = n (I_n\otimes J_m).
\end{equation*}
Then $W = U \otimes V$
for some $n\times n$ type~II matrix $U$ and some $m\times m$ type~II matrix $V$.
\end{lemma}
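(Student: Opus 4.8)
The plan is to read off from the two hypotheses, via Lemma~\ref{LemmaEquiv12}, exactly which pairs of columns are related, and then to convert these relations into multiplicative identities among the entries of the blocks $W[i,j]$. Throughout I index a column by a pair $(j,h)$ with $j\in\{1,\dots,n\}$ the block and $h\in\{1,\dots,m\}$ the position inside the block, and I write $W[i,j](g,h)$ for the $(g,h)$-entry of the block $W[i,j]$. In this notation the desired conclusion $W=U\otimes V$ is exactly the statement that $W[i,j]=U(i,j)\,V$ for a single $m\times m$ matrix $V$ and scalars $U(i,j)$.

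I would first translate the hypotheses. Since $\thw(I_n\otimes J_m)=m(J_n\otimes I_m)$ has $(a,b)$-entry $m$ precisely when the two columns share the same position $h$, Lemma~\ref{LemmaEquiv12}(1) tells me that for columns $a=(j,h)$ and $b=(j',h)$ the block vectors $We_a[i]$ and $We_b[i]$ are proportional; equivalently the ratio $W[i,j](g,h)/W[i,j'](g,h)$ is independent of $g$. Call this property (B). Likewise $\thw(J_n\otimes I_m)=n(I_n\otimes J_m)$ has $(a,b)$-entry $n$ precisely when the two columns lie in the same block $j$, so Lemma~\ref{LemmaEquiv12}(2) applied to $(j,h)$ and $(j,h')$ gives that $W[i,j](g,h)/W[1,j](g,h)$ is independent of $h$. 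Call this property (A). The normalization of $W$ fixes the reference values $W[1,j](1,h)=1$ and $W[i,1](g,1)=1$ for all indices.

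Next I would feed the normalization into each property on its first block. Taking $i=1$ in (B) and evaluating the (constant in $g$) ratio at $g=1$ forces $W[1,j](g,h)=W[1,1](g,h)$ for every $j$; set $V:=W[1,1]$, so the whole top block-row satisfies $W[1,j]=V$. Symmetrically, taking $j=1$ in (A) and evaluating at $h=1$ forces $W[i,1]=V$ for every $i$, so the entire first block-column equals $V$ as well. Now (A) in general, after dividing by $W[1,j]=V$, says $W[i,j](g,h)=\mu_{ij}(g)\,V(g,h)$ for some factor $\mu_{ij}(g)$ independent of $h$. Finally (B) in general, after dividing by $W[i,1]=V$, shows that this same factor $\mu_{ij}(g)$ is in fact independent of $g$; writing $U(i,j):=\mu_{ij}$ gives $W[i,j]=U(i,j)\,V$, that is, $W=U\otimes V$.

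It then remains to see that $U$ and $V$ are themselves type~II. From $W=U\otimes V$ we get $W\snt W=(U\snt U)\otimes(V\snt V)$, which by hypothesis equals $mn\,I=(nI_n)\otimes(mI_m)$; since the diagonal of $M\snt M$ is constant and equal to the order of $M$ for any matrix $M$ without zero entries, comparing the entries inside a single diagonal Kronecker block forces the off-diagonal entries of $V\snt V$ to vanish, whence $V\snt V=mI_m$, and symmetrically $U\snt U=nI_n$. The step I expect to be the crux is the separation of variables in the previous paragraph: each of (A) and (B) controls only one of the two within-block directions, and it is only after the normalization has pinned both the top block-row and the first block-column equal to $V$ that the two proportionality factors collapse to a single scalar $U(i,j)$ per block. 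Keeping straight which normalization (first row versus first column) feeds which property is where the bookkeeping must be handled with care.
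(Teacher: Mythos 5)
Your proof is correct and follows essentially the same route as the paper's: both translate the two hypotheses through Lemma~\ref{LemmaEquiv12}, use the normalization of the first row to pin down $W[1,j]=V$ for all $j$, and then combine the two proportionality properties to get $W[i,j]=U(i,j)V$. The only differences are cosmetic — you anchor on the first block-column ($W[i,1]=V$) where the paper anchors on the constancy of the first column of each block, and you spell out the "straightforward" verification that $U$ and $V$ are type~II via $W\snt W=(U\snt U)\otimes(V\snt V)$.
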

\begin{proof}
By Lemma~\ref{LemmaEquiv12},
we have $a \equiv b\pmod m$ if and only if
\begin{equation*}
We_a[i] \in \spn(We_b[i])  \quad \text{for $i=1,\ldots,n.$}
\end{equation*}
Since the first row of $W$ is $\one^T_{mn}$,  setting $i=1$ gives
\begin{equation}\label{Eqn_W111n}
W[1,1]=W[1,2]=\ldots=W[1,n].
\end{equation}
Let $V=W[1,1]$.
Further, the first column of $W$ is $\one_{mn}$, so
there exists a non-zero scalar $U(i,j)$ such that 
\begin{equation}
\label{Eqn_Tensor}
We_{(j-1)m+1}[i]=U(i,j)\one_m \quad \text{for $i,j=1,\ldots,n$.}
\end{equation}

Suppose $a=(j-1)m+h$ and $c=(j-1)m+1$ for $j=1,\ldots,n$
and $h=1,\ldots,m$.
As $\thw(J_n \otimes I_m) = n (I_n\otimes J_m)$,
Lemma~\ref{LemmaEquiv12} implies that
\begin{align*}
We_a[i]&=
(Ve_h)\circ(U(i,j)\one_m)\circ(W\snv[1,1]e_1)
&&\text{(by (\ref{Eqn_W111n}), (\ref{Eqn_Tensor}))}
\\ &=
U(i,j)(Ve_h).
\end{align*}
Hence $W[i,j]=U(i,j)V$ for $i,j=1,\ldots,n$.
It is straightforward to check that both $U$ and $V$ are type~II.
\end{proof}

\begin{theorem}
\label{ThmProduct}
Suppose $W$ is a type~II matrix and
\begin{equation*}
I_n \otimes J_m, J_n\otimes I_m \in \nomw.
\end{equation*}
Then 
\begin{equation*}
W=D_1(U\otimes V) D_2 P
\end{equation*} 
for some $n\times n$ type~II matrix $U$, $m\times m$ type~II matrix $V$,
permutation matrix $P$, and invertible diagonal matrices $D_1$ and $D_2$.
In this case, 
\begin{equation*}
\nomw = \nom{U} \otimes \nom{V}.
\end{equation*}
\end{theorem}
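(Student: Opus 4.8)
The plan is to assemble Theorem~\ref{ThmProduct} directly from the three preceding lemmas, so that the argument amounts to tracking how $\nomw$ and the map $\thw$ transform under right-multiplication by a permutation and under diagonal scaling. I would begin by invoking Lemma~\ref{LemmaWP} to produce a permutation matrix $P_0$ for which $\Th_{WP_0}(I_n\otimes J_m)=m(J_n\otimes I_m)$ and $\Th_{WP_0}(J_n\otimes I_m)=n(I_n\otimes J_m)$; as noted in the proof of that lemma, Lemma~\ref{LemmaEquiv} already gives $\nomw=\nom{WP_0}$.

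Next I would normalize. There exist invertible diagonal matrices $D,D'$ with $W'=D(WP_0)D'$ normalized. By the remark preceding Lemma~\ref{LemSuzuki} applied to $WP_0$, we have $\nom{W'}=\nom{WP_0}=\nomw$ and $\Th_{W'}(M)=\Th_{WP_0}(M)$ for every $M\in\nomw$. In particular $\Th_{W'}$ sends $I_n\otimes J_m$ and $J_n\otimes I_m$ to $m(J_n\otimes I_m)$ and $n(I_n\otimes J_m)$, respectively, so $W'$ meets the hypotheses of Lemma~\ref{LemSuzuki}. Applying that lemma yields $W'=U\otimes V$ with $U$ an $n\times n$ type~II matrix and $V$ an $m\times m$ type~II matrix.

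It then remains to unwind the transformations. From $W'=D(WP_0)D'$ we obtain $W=D^{-1}(U\otimes V)D'^{-1}P_0^{-1}$, so setting $D_1=D^{-1}$, $D_2=D'^{-1}$, and the permutation matrix $P=P_0^{-1}$ gives the claimed factorization $W=D_1(U\otimes V)D_2P$. For the algebra statement, the chain $\nomw=\nom{WP_0}=\nom{W'}=\nom{U\otimes V}$, combined with Lemma~\ref{LemmaTensor}, yields $\nomw=\nom{U}\otimes\nom{V}$.

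Since the substantive work is carried out in Lemmas~\ref{LemmaWP} and~\ref{LemSuzuki}, I expect no genuine obstacle here. The only point demanding care is bookkeeping: verifying at each step that the two auxiliary matrices $I_n\otimes J_m$ and $J_n\otimes I_m$ survive the permutation and normalization with the correct $\Th$-values, and that $\nomw$ is preserved throughout by the invariances recorded in Lemma~\ref{LemmaEquiv}.
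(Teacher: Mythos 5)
Your proposal is correct and follows exactly the same route as the paper's own proof: Lemma~\ref{LemmaWP} to obtain the permutation, normalization by diagonal matrices, Lemma~\ref{LemSuzuki} to get the Kronecker factorization, and Lemmas~\ref{LemmaEquiv} and~\ref{LemmaTensor} to conclude. Your version merely spells out the bookkeeping (the invariance of $\Th$ under normalization and the inversion of the factorization) that the paper leaves implicit.
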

\begin{proof}
By Lemma~\ref{LemmaWP}, 
there exists a permutation matrix $P$ such that 
(\ref{Eqn_WP}) holds.
There exist invertible diagonal matrices $D$
and $D'$ such that $W'=DWPD'$ is normalized.
Then by Lemma~\ref{LemSuzuki},
$W' = U \otimes V$
for some $n\times n$ type~II matrix $U$ and 
$m\times m$ type~II matrix $V$.
The rest of the proof is immediate from
Lemma~\ref{LemmaEquiv} and Lemma~\ref{LemmaTensor}.
\end{proof}

\section{Generalized Hamming Schemes}
\label{SectionGenHam}

We recall from \cite{GodsilGenHam} the definition of and some facts
concerning the generalized Hamming scheme $\cH(n,\cA)$.
Let $\cA=\{A_0,A_1,\ldots,A_d\}$ be an association scheme on $q$ vertices.
Consider the product association scheme $\cA^{\otimes n}$
and the symmetric group $\cS_n$ acting on $\{1,\ldots,n\}$.  
For each element $\sigma \in \cS_n$, define
\begin{equation*}
(A_{i_1}\otimes A_{i_2}\otimes \cdots \otimes A_{i_n})^{\sigma}
= A_{i_{1\sigma\inv }} \otimes
 A_{i_{2\sigma\inv }} \otimes \cdots\otimes
 A_{i_{n\sigma\inv }}.
\end{equation*}
Then $\cS_n$ is a group of algebra automorphism of 
the span of $\cA^{\otimes n}$.
The set of matrices in the span of
$\cA^{\otimes n}$ fixed by every element of $\cS_{n}$
is closed under matrix multiplication, Schur product and transpose,
and this set contains $I_{q^n}$ and $J_{q^n}$.
It is the Bose-Mesner algebra of a subscheme of $\cA^{\otimes n}$ 
\cite{GodsilGenHam}.  
This subscheme is called the {\em generalized Hamming scheme} $\cH(n,\cA)$.  
In particular, for $i=1,\ldots,d$, the matrix
\begin{equation*}
(A_i\otimes I_q \otimes \cdots \otimes I_q) +
(I_q\otimes A_i \otimes \cdots \otimes I_q) + \cdots +
(I_q\otimes I_q \otimes \cdots \otimes A_i) 
\end{equation*}
lies in $\cH(n,\cA)$.
The {\sl Hamming scheme} $\cH(n,q)$ is $\cH(n,\cA)$ when $\cA$ is the trivial
association scheme on $q$ vertices.

Let $\Omega$ be the set of words of length $n$ over an alphabet of size $q$.
The Hamming graph $H(n,q)$ has vertex set $\Omega$, and two words are adjacent if and only if
they differ in exaclty one position.  
We use $A(n)$ to denote the adjacency matrix of $H(n,q)$.
Up to permutation of the vertices, we can write $A(n)$ as
\begin{align}
\label{EqnRecursion}
&\left[(J_q-I_q)\otimes I_q \otimes \cdots \otimes I_q\right] +
\left[I_q\otimes (J_q-I_q) \otimes \cdots \otimes I_q\right] 
\\ \nonumber
&\quad+ \cdots+
\left[I_q\otimes \cdots \otimes I_q \otimes (J_q-I_q)\right] \\
&= 
\sum_{i=1}^d
\left[
(A_i\otimes I_q \otimes \cdots \otimes I_q) +
(I_q\otimes A_i \otimes \cdots \otimes I_q) 
\right.
\nonumber\\ &\quad+ \left.\cdots+
(I_q\otimes I_q \otimes \cdots \otimes A_i) \right].
\nonumber
\end{align}
Therefore $A(n)$ lies in the Bose-Mesner algebra of $\cH(n,\cA)$ for any association scheme $\cA$ on $q$ vertices.
More importantly, $A(n)$ satisfies
the recursion
\begin{eqnarray*}
A(n) &=& (J_q-I_q)\otimes I_\qn1 + I_q \otimes A(n-1)\\
&=&
\begin{pmatrix}
A(n-1) & I_\qn1 & \cdots & I_\qn1\\
I_\qn1 & A(n-1) &  \cdots & I_\qn1\\
\vdots & \vdots & \ddots & \vdots\\
I_\qn1 & I_\qn1 &  \cdots & A(n-1)
\end{pmatrix}.
\end{eqnarray*}

Here are some facts about $A(n)$ that are useful in the next section,
see \cite{MR1002568} and \cite{MR1220704} for details.
The matrix $A(n)$ has $n+1$ eigenvalues
\begin{equation*}
\th_h(n) = (q-1)(n-h) - h \quad
\text{for $h= 0, \ldots, n$}.
\end{equation*}
The eigenspace of $\th_h(n)$, denoted by $V_h(n)$, has dimension $(q-1)^h\binom{n}{h}$.
Note that $\th_0(n)=(q-1)n$ is the valency of the vertices in the Hamming graph $H(n,q)$, so
$\one_{q^n}$ is an eigenvector of $A(n)$ belonging to the
eigenvalue $\th_0(n)$.  Since $V_0(n)$ has dimension one,
$V_0(n) =\spn(\one_{q^n})$.

The next lemma exhibits the recursive nature of the eigenvectors of $A(n)$ in $V_h(n)$ when $h\geq 1$.  
Given a column vector $u$ of length $q^n$,
we use $u[i]$ to denote the $i$-th block of $u$ of length $\qn1$.
\begin{lemma}
\label{LemEigenvectors}
Let $1\leq h\leq n$. Then $u\in V_h(n)$ if and only if
\begin{equation}
u[i]-u[j]\in  V_{h-1}(n-1)
\quad \text{for $i,j=1,\ldots, q$,}\label{Eqn2}\\
\end{equation}
and
\begin{equation}
\sum_{i=1}^q u[i]\in  V_h(n-1).\label{Eqn3}
\end{equation}
In particular, $u\in V_1(n)$ if and only if
there exist a vector $w\in V_1(n-1)$
and scalars $a_1, \ldots, a_q$ such that
$a_1+\dots+a_q=0$
and
\begin{equation*}
u[i]=w+a_i\one_\qn1
\quad \text{for $i=1,\ldots, q$.}
\end{equation*}
\end{lemma}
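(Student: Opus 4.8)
The plan is to work directly from the block recursion for $A(n)$ recorded in (\ref{EqnRecursion}), which expresses the action of $A(n)$ on $u$ entirely through the action of $A(n-1)$ on the blocks $u[i]$. Writing $s=\sum_{j=1}^q u[j]$ and reading off the $i$-th block of $A(n)u$ from that recursion, I obtain
\begin{equation*}
(A(n)u)[i] = s - u[i] + A(n-1)u[i]
\qquad\text{for } i=1,\ldots,q.
\end{equation*}
The entire argument then rests on two numerical identities that follow at once from $\th_h(n)=(q-1)(n-h)-h$, namely
\begin{equation*}
\th_{h-1}(n-1)=\th_h(n)+1
\qquad\text{and}\qquad
\th_h(n-1)=\th_h(n)-(q-1).
\end{equation*}
Throughout I will use that $A(n-1)$ is a real symmetric matrix, so its $\th_k(n-1)$-eigenspace is exactly $V_k(n-1)$; this is what lets me pass from ``eigenvector with eigenvalue $\th_k(n-1)$'' to ``element of $V_k(n-1)$''.

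For the forward direction, I assume $u\in V_h(n)$, so $A(n)u=\th_h(n)u$. Equating the $i$-th block in the display above and solving gives $A(n-1)u[i]=(\th_h(n)+1)u[i]-s=\th_{h-1}(n-1)u[i]-s$. Subtracting the relations for indices $i$ and $j$ cancels $s$ and yields $A(n-1)(u[i]-u[j])=\th_{h-1}(n-1)(u[i]-u[j])$, which is exactly (\ref{Eqn2}); summing over $i$ and using $\th_h(n)+1-q=\th_h(n-1)$ gives $A(n-1)s=\th_h(n-1)s$, which is (\ref{Eqn3}).

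The backward direction is the step that needs both hypotheses at once, and is where I expect the only real care to be required: neither (\ref{Eqn2}) nor (\ref{Eqn3}) alone controls an individual block $u[i]$. The key trick is to write $qu[i]-s=\sum_{j=1}^q(u[i]-u[j])$, which by (\ref{Eqn2}) lies in $V_{h-1}(n-1)$, so that $A(n-1)(qu[i]-s)=\th_{h-1}(n-1)(qu[i]-s)$. Substituting $A(n-1)s=\th_h(n-1)s$ from (\ref{Eqn3}) and solving for $A(n-1)u[i]$ (dividing by $q$ is legitimate as $q\neq0$), using $\th_h(n-1)-\th_{h-1}(n-1)=-q$, recovers $A(n-1)u[i]=(\th_h(n)+1)u[i]-s$. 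Feeding this back into the block formula gives $(A(n)u)[i]=\th_h(n)u[i]$ for every $i$, i.e.\ $A(n)u=\th_h(n)u$, so $u\in V_h(n)$.

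Two boundary remarks close the argument. When $h=n$, $V_n(n-1)$ must be read as $\{0\}$, consistent with $\dim V_n(n-1)=(q-1)^n\binom{n-1}{n}=0$; correspondingly $\th_n(n-1)$ is not among the eigenvalues $\th_0(n-1),\ldots,\th_{n-1}(n-1)$ of $A(n-1)$, so the relation $A(n-1)s=\th_n(n-1)s$ forces $s=0$, matching (\ref{Eqn3}). For the final ``in particular'' statement I specialize to $h=1$: (\ref{Eqn2}) becomes $u[i]-u[j]\in V_0(n-1)=\spn(\one_\qn1)$ and (\ref{Eqn3}) becomes $s\in V_1(n-1)$. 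Setting $w=\frac1q s\in V_1(n-1)$, each $u[i]-w=\frac1q\sum_{j=1}^q(u[i]-u[j])$ is a scalar multiple $a_i\one_\qn1$, and $\sum_i a_i\one_\qn1=s-qw=0$ gives $a_1+\cdots+a_q=0$; the converse is an immediate substitution into the two conditions. This last part is entirely routine once the general equivalence is in hand.
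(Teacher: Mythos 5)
Your proof is correct and follows essentially the same route as the paper: read off the block equation $(A(n)u)[i]=\sum_j u[j]-u[i]+A(n-1)u[i]$ from the recursion for $A(n)$, then difference over $i,j$ and sum over $i$ using the identities $\th_{h-1}(n-1)=\th_h(n)+1$ and $\th_h(n-1)=\th_h(n)-(q-1)$. The only difference is that you spell out the converse (via $qu[i]-s=\sum_j(u[i]-u[j])$) and the $h=n$ boundary case, both of which the paper dismisses as straightforward; your details are accurate.
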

\begin{proof}
From
\begin{equation*}
\begin{pmatrix}
A(n-1) & I_\qn1 &  \cdots & I_\qn1\\
I_\qn1 & A(n-1) &  \cdots & I_\qn1\\
\vdots & \ddots &  \vdots & \vdots\\
I_\qn1 & I_\qn1 & \cdots & A(n-1)
\end{pmatrix}
\begin{pmatrix}
u[1]\\
u[2]\\
\vdots\\
u[q]
\end{pmatrix}
=
\th_h(n)
\begin{pmatrix}
u[1]\\
u[2]\\
\vdots\\
u[q]
\end{pmatrix},
\end{equation*}
we get
\begin{equation}
\label{Eqn1}
\left(A(n-1)-I_\qn1\right) u[i] +\sum_{l=1}^q u[l] = 
\th_h(n) u[i]
\end{equation}
for $i=1,\ldots, q$.
It follows that
\begin{equation*}
\left(A(n-1)-I_\qn1\right) (u[i]-u[j]) = \th_h(n)(u[i]-u[j])
\end{equation*}
or 
\begin{equation*}
A(n-1) (u[i]-u[j]) = \th_{h-1}(n-1)(u[i]-u[j])
\end{equation*}
for $i, j = 1,\ldots, q$ and (\ref{Eqn2}) follows.

We also get from (\ref{Eqn1}) that
\begin{equation*}
\sum_{i=1}^q\left((A(n-1)-I_\qn1) u[i] +\sum_{l=1}^q u[l]\right) = 
\th_h(n) \sum_{i=1}^q u[i]
\end{equation*}
which leads to
\begin{equation*}
A(n-1)\sum_{i=1}^q u[i] =
\th_h(n-1) \sum_{i=1}^q u[i].
\end{equation*}
Hence (\ref{Eqn3}) is true.
The converse is straightforward.

Suppose $u\in V_1(n)$.
From (\ref{Eqn2}), there exist 
scalars $a_1, \ldots, a_q$ such that
\begin{equation*}
\frac{1}{q}\sum_{j=1}^q(u[i]-u[j])=a_i\one_\qn1
\quad \text{for $i=1,\ldots,q$.}
\end{equation*}
Set 
\begin{equation*}
w=\frac1q \sum_{j=1}^q u[j].
\end{equation*}
Then 
by (\ref{Eqn3}), $w\in V_1(n-1)$ and
$w+a_i\one_\qn1=u[i]$ holds for $i=1,\dots,q$.
Since
\begin{equation*}
\sum_{i=1}^q a_i \one_\qn1
=\frac{1}{q}\sum_{i,j=1}^q(u[i]-u[j])=0,
\end{equation*}
we see that $a_1+\cdots+a_q=0$.
The converse is again straightforward.
\end{proof}

\section{When $\nomw$ contains the Hamming graph}
\label{SectionResult}
In this section, we assume $W$ is a type~II matrix and $A(n)$ is the adjacency matrix of
the Hamming graph $H(n,q)$ given in (\ref{EqnRecursion}) for some $n\geq 2$ and $q\geq 3$.
\begin{lemma}
Suppose $A(n) \in \nomw$.  If $\yab\in V_1(n)$, then either
\begin{equation*}
\yab = 
\begin{pmatrix}
a_1 \one_\qn1 &
a_2 \one_\qn1 &
\cdots &
a_q \one_\qn1
\end{pmatrix}^T
\end{equation*}
where $a_1+a_2+\cdots+a_q=0$, or
\begin{equation*}
\yab = 
\begin{pmatrix}
w & w & \cdots & w 
\end{pmatrix}^T
\end{equation*}
for some non-zero vector $w \in V_1(n-1)$.
\end{lemma}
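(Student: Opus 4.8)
The plan is to reduce at once to the recursive description in Lemma~\ref{LemEigenvectors}, and then to extract constraints from the observation that the entrywise reciprocal of a $Y$-vector is again a $Y$-vector, hence again a common eigenvector of $\nomw$. By the last assertion of Lemma~\ref{LemEigenvectors}, $\yab\in V_1(n)$ means $\yab[i]=w+a_i\one_\qn1$ for some $w\in V_1(n-1)$ and scalars with $a_1+\cdots+a_q=0$; the two stated forms are precisely the cases $w=0$ and $a_1=\cdots=a_q=0$. So I assume for contradiction that $w\neq0$ (whence $w$ is nonconstant, being nonzero and orthogonal to $\one$) and that the $a_i$ are not all equal.

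The crucial input is that $\yab\snv=\yy ba$ is one of the defining vectors $We_b\circ W\snv e_a$, so it too is a common eigenvector of $A(n)\in\nomw$, say $\yy ba\in V_{h'}(n)$. Its $i$-th block is $(w+a_i\one)\snv$, with entries $(a_i+w(x))\inv$. Put $H(t)=\sum_{x=1}^{\qn1}(t+w(x))\inv$, so that $\sum_x w(x)(t+w(x))\inv=\qn1-tH(t)$. I pair $\yy ba$ against two test vectors. By Lemma~\ref{LemEigenvectors}, the vector with every block equal to $w$ lies in $V_1(n)$, and for any scalars $c_i$ the vector with $i$-th block $c_i\one$ lies in $V_0(n)\oplus V_1(n)$. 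Since the eigenspaces of the symmetric matrix $A(n)$ are mutually orthogonal, if $h'\geq2$ then $\yy ba$ is orthogonal to both. Orthogonality to all the vectors $(c_i\one)_i$ forces $H(a_i)=0$ for every $i$, and then orthogonality to $(w,\dots,w)$ gives $0=\sum_{i=1}^q\bigl(\qn1-a_iH(a_i)\bigr)=q^n$, which is absurd. Hence $h'\leq1$.

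It remains to treat $h'\leq1$, and this is where I expect the real difficulty. The case $h'=0$ is immediate: it would make $\yy ba$, hence $\yab$, constant, contradicting $\yab\in V_1(n)$. In the case $h'=1$ the vector $\yy ba$ again lies in $V_1(n)$, so by Lemma~\ref{LemEigenvectors} its blocks differ by scalar multiples of $\one$; writing this out through the resolvent identity $(w+a_i\one)\snv-(w+a_j\one)\snv=(a_j-a_i)\,(w+a_i\one)\snv\circ(w+a_j\one)\snv$ shows that $(a_i+w(x))(a_j+w(x))$ is independent of $x$ whenever $a_i\neq a_j$. Consequently $w$ takes only two distinct values, and likewise the $a_i$ take only two distinct values, tied together by one linear relation.

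The obstacle is that such two-valued data already satisfy every constraint that follows from ``$\yab$ and $\yy ba$ are eigenvectors of $A(n)$'': for $q$ odd a direct computation rules these configurations out (the mean-zero conditions become inconsistent unless some entry of $\yab$ vanishes), but for $q$ even certain balanced configurations survive all of these tests while keeping all entries nonzero. Excluding the latter appears to require genuinely more of the Nomura structure than the eigenvector property alone --- concretely, the Schur-closure of $\nomw$ together with the cocycle relation $\yy ac=\yab\circ\yy bc$ and the basis property of $\{\yy ac\}_{c}$, which should fix the surviving scalars and deliver the final contradiction. This degenerate two-valued case is the step I expect to be the main obstacle.
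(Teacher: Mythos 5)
Your reduction via Lemma~\ref{LemEigenvectors}, the use of $\yab\snv=\yy ba$, and the resolvent computation forcing $w$ and the $a_i$'s to be two-valued all track the paper's proof closely. Your detour to rule out $\yy ba\in V_{h'}(n)$ for $h'\geq 2$ by pairing against block-constant vectors and $(w,\dots,w)$ is correct but unnecessary: since $A(n)$ is symmetric and $\thw(M^T)=\thw(M)^T$ (Theorem~\ref{ThmNom}), $\thw(A(n))$ is symmetric, so $\yy ba$ lies in $V_1(n)$ for free. The odd-$q$ contradiction you gesture at (the two mean-zero conditions $\one^T\yab=0$ and $\one^T\yy ba=0$ force the number of ``$+$'' blocks to be $q/2$) is indeed how the paper concludes that case.

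The genuine gap is the even-$q$ case, which you explicitly leave open. This is not a loose end but the main technical content of the lemma, and it does not follow from any amount of further analysis of the single pair $\yab$, $\yy ba$: as you correctly suspect, balanced two-valued configurations pass every test available from these two vectors alone. The paper's resolution is a dimension count over the whole basis $\{\yy bc : c\in\Omega\}$. After normalizing so that the first $q/2$ blocks of $\yab$ equal $w+a_r\one$ and the last $q/2$ equal $w-a_r\one$, take any $c$ with $\yy bc\in V_1(n)$; its block differences are $b_{ij}\one$, and the cocycle relation $\yy ac=\yab\circ\yy bc$ gives $\yy ac[i]-\yy ac[j]=b_{ij}(w\pm a_r\one)$ for $i,j$ in the same half. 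Since $\yy ac[i]-\yy ac[j]$ must lie in a single eigenspace $V_{k-1}(n-1)$ while $w$ and $\one$ sit in distinct eigenspaces, all these $b_{ij}$ vanish, so every such $\yy bc$ is constant on each half and hence lies in a subspace of dimension at most $1+(q-1)(n-1)$. But exactly $\dim V_1(n)=(q-1)n$ of the linearly independent vectors $\yy bc$ must lie in $V_1(n)$, and $(q-1)n>1+(q-1)(n-1)$ precisely when $q\geq 3$. Without this (or an equivalent) argument, your proof establishes the lemma only for odd $q$.
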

\begin{proof}
From Lemma~\ref{LemEigenvectors}, there exist $w\in V_1(n-1)$ and 
scalars $a_1, \ldots, a_q$ satisfying $a_1+\cdots + a_q=0$
such that
\begin{equation}
\label{Form}
\yab[i] = w + a_i \one_\qn1
\quad \text{for $i=1,\ldots,q$}.
\end{equation}

Now suppose $w$ is not the zero vector and not all $a_i$'s are zero,
and we shall derive a contradiction.
By Theorem~\ref{ThmNom} and the symmetry of $A(n)$,
\begin{equation*}
\thw(A(n))(a,b) = \thw(A(n))(b,a)
\end{equation*}
so $Y_{b,a} \in V_1(n)$.
Similar to $\yab$, it follows from (\ref{Eqn2}) that
there exist scalars $c_{ij}$ such that
\begin{equation}
\label{Eqn4}
Y_{b,a}[i]-Y_{b,a}[j]=\yab[i]\snv - \yab[j]\snv = c_{ij} \one_\qn1
\end{equation}
for all $i, j =1, \ldots, q$.

Applying (\ref{Eqn4}) to the $r$-th and the $s$-th blocks gives
\begin{equation}\label{Eqn9}
\frac{1}{w(l)+a_r} - \frac{1}{w(l)+a_s} = c_{rs} \quad
\text{for $l=1,\ldots, \qn1$.}
\end{equation}
There exists $r\in\{1,\dots,q\}$ such that $a_r\neq0$,
and since $a_1+\cdots+a_q=0$, there exists 
$s\in\{1,\dots,q\}$ such that $a_s\neq a_r$. Then
$c_{rs}\neq0$ by (\ref{Eqn9}).
Hence, for $l=1,\ldots, \qn1$, $w(l)$ is a root of the quadratic
\begin{equation}\label{EqnQ}
x^2+(a_r+a_s)x+a_ra_s+\frac{a_r-a_s}{c_{rs}}=0.
\end{equation}
Since $w\in V_1(n-1)$ is orthogonal to $\one_\qn1$ and $w\neq0$,
there exist $l$ and $l'$ such that $w(l)\neq w(l')$.
Then $w(l)$ and $w(l')$ are the roots of the
quadratic (\ref{EqnQ}).
This implies that $w$ has two distinct entries, the sum
of which is $-(a_r+a_s)$. Also, since $s$ was arbitrary
subject to $a_s\neq a_r$, we see that 
the $a_i$'s take exactly two distinct values.  

Let $w$ have $x$ entries equal $\frac{-(a_r+a_s)}{2} + \alpha$
and $(\qn1-x)$ entries equal $\frac{-(a_r+a_s)}{2}-\alpha$ where
$\alpha\neq0$.
Let $\yab$ have $y$ blocks equal $w+a_r \one_\qn1$, $q-y$ blocks equal 
$w+a_s\one_\qn1$.
Since $\yab, Y_{b,a} \in V_1(n)$, we have
\begin{eqnarray*}
\one_{q^n}^T \yab = &(2y-q)\qn1\frac{(a_r-a_s)}{2} +(2x-\qn1)q\alpha& = 0,\\
\one_{q^n}^T Y_{b,a} = &\frac{1}{(\frac{a_r-a_s}{2})^2-\alpha^2}
\left((2y-q)\qn1\frac{(a_r-a_s)}{2} -(2x-\qn1)q\alpha \right) & = 0
\end{eqnarray*}
These two equations give 
\begin{equation*}
x=\frac{\qn1}{2} \quad \text{and} \quad y=\frac{q}{2}.
\end{equation*}
This is a contradiction if $q$ is odd.

Now assume $q$ is even.
Then 
$\sum_{i=1}^q a_i= \frac{q}{2} a_r + \frac{q}{2}a_s=0$,
so $a_s=-a_r$.  
Assume, without loss of generality, that the first $\frac{q}{2}$ blocks 
of $\yab$ are $w+a_r\one_\qn1$ and the last $\frac{q}{2}$ blocks are $w-a_r\one_\qn1$.

Since $\dim V_1(n)=(q-1)n>1$ and $\{Y_{b,c}:c\in\Omega\}$
is a basis of $\cx^{q^n}$, there exists $c\neq a$ such that
$Y_{b,c} \in V_1(n)$.
From (\ref{Eqn2}), there exist scalars $b_{ij}$ such that 
\begin{equation*}
Y_{b,c}[i]-Y_{b,c}[j]=b_{ij}\one_\qn1
\qquad
\text{for $i,j=1,\ldots,q$.}
\end{equation*}
There exists $k\in\{0,1\dots,n\}$ such that
$Y_{a,c}\in V_k(n)$.
Then by (\ref{Eqn2}), we have
\begin{equation*}
Y_{a,c}[i]-Y_{a,c}[j] \in V_{k-1}(n-1).
\end{equation*}
On the other hand,
\begin{equation*}
Y_{a,c}[i]-Y_{a,c}[j] = 
\begin{cases}
b_{ij}(w+a_r\one_\qn1)
&\text{if $1\leq i,j\leq\frac{q}{2}$,}\\
b_{ij}(w-a_r\one_\qn1)
&\text{if $\frac{q}{2}\leq i,j\leq q$.}
\end{cases}
\end{equation*}
Since $w$ and $a_r\one_\qn1$ are non-zero vectors in distinct eigenspaces of $A(n-1)$,
we have $b_{ij}=0$ for $1 \leq i, j \leq \frac{q}{2}$
and for $\frac{q}{2}+1\leq i, j\leq q$.
Therefore the first $\frac{q}{2}$ blocks of $Y_{b,c}$ are identical and
the last $\frac{q}{2}$ blocks of $Y_{b,c}$ are identical.
If we let $u=Y_{b,c}[1]+Y_{b,c}[q]$, then
\begin{equation*}
Y_{b,c}[i]=
\begin{cases}
\frac{b_{1q}}{2}\one_\qn1 + \frac{1}{2} u & \text{if $i=1,\ldots,\frac{q}{2}$,}\\
-\frac{b_{1q}}{2}\one_\qn1 + \frac{1}{2} u & \text{if $i=\frac{q}{2}+1,\ldots,q$.}
\end{cases}
\end{equation*}
By (\ref{Eqn3}),
\begin{equation*}
\sum_{i=1}^q Y_{b,c}[i] = \frac{q}{2} u \in V_1(n-1).
\end{equation*}
So if $Y_{b,c} \in V_1(n)$, then it lies in
the span of
\begin{equation*}
\left\{
\begin{pmatrix} 
\one_\qn1\\
\vdots\\
\one_\qn1\\
-\one_\qn1\\
\vdots\\
-\one_\qn1\\
\end{pmatrix}
\right\}
\cup
\left\{
\begin{pmatrix} 
u\\
\vdots\\
u\\
u\\
\vdots\\
u\\
\end{pmatrix}
: u \in V_1(n-1)
\right\},
\end{equation*}
which has dimension at most $1+(q-1)(n-1)$.
The set $\{Y_{b,c}: c\in \Omega\}$ is a basis of $\cx^{q^n}$, so there should be
$\dim V_1(n)=(q-1)n$ eigenvectors of the form $Y_{b,c}$ in
$V_1(n)$.  But when $q\geq 3$, $(q-1)n > 1+(q-1)(n-1)$.
This is a contradiction.
\end{proof}

\begin{lemma}
If $A(n)\in \nomw$, then 
\begin{equation*}
I_q \otimes J_\qn1 \in \nomw.
\end{equation*}
\label{LemmaA_1N_w}
\end{lemma}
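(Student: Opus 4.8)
The plan is to show that $\thw(A(n))$, as an element of $\nom{W^T}$, is a polynomial in the single matrix $\thw(A(n))$ that, together with the eigenspace structure, forces $I_q\otimes J_{q^{n-1}}$ into $\nomw$. More concretely, recall that by Theorem~\ref{ThmNom} the map $\thw$ sends matrix products to Schur products, so Schur powers of $\thw(A(n))$ correspond to ordinary matrix powers of $A(n)$. The key observation I would exploit is that the previous lemma has pinned down, for each pair $(a,b)$ with $\yab\in V_1(n)$, exactly two possible shapes of $\yab$: either the blocks are scalar multiples of $\one_{q^{n-1}}$ (summing to zero), or all $q$ blocks are equal to a common vector $w\in V_1(n-1)$. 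The first type is precisely the eigenvector condition characterizing membership in $\nomw$ of the matrix $I_q\otimes J_{q^{n-1}}$ via Lemma~\ref{LemmaEquiv12}(1).

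**Main steps.**
First I would identify $V_1(n)$ explicitly as an eigenspace of $A(n)$ and note that the candidates $\yab$ lying in $V_1(n)$ split into the two families above. Second, I would argue that the matrix $I_q\otimes J_{q^{n-1}}$ has every $\yab$ as an eigenvector: for $\yab\notin V_1(n)$ I expect to handle this by observing that $I_q\otimes J_{q^{n-1}}$ acts blockwise by summing within each block, so I must check that $(I_q\otimes J_{q^{n-1}})\yab$ is a scalar multiple of $\yab$ in all cases. The decisive computation is that for vectors of the first type the block-sums vanish (eigenvalue $0$), while for vectors of the second type each block sum equals $J_{q^{n-1}}w$; since $w\in V_1(n-1)$ is orthogonal to $\one_{q^{n-1}}$, this is again $0$. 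Thus on all of $V_1(n)$ the matrix $I_q\otimes J_{q^{n-1}}$ acts as $0$. Third, I would extend this eigenvector verification to the remaining eigenvectors $\yab$ (those not in $V_1(n)$) by using the recursive block description of $A(n)$ and Lemma~\ref{LemEigenvectors} to understand how $\yab$ decomposes into blocks, then checking directly that $J_{q^{n-1}}$ applied within each block returns a scalar multiple.

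**The main obstacle.**
The hard part will be the third step: controlling the action of $I_q\otimes J_{q^{n-1}}$ on eigenvectors $\yab$ that lie in $V_h(n)$ for $h\geq 2$, where the clean dichotomy of the previous lemma is unavailable. I expect the route around this is not to analyze each $\yab$ individually but to show instead that $I_q\otimes J_{q^{n-1}}$ is a polynomial, under Schur product, in matrices already known to lie in $\nomw$ — in particular that it can be built from $A(n)$ itself. Since $\nomw$ is closed under matrix multiplication, Schur product and transpose by Theorem~\ref{ThmNom}, and since $I$ and $J_{q^n}$ are always present, it suffices to exhibit $I_q\otimes J_{q^{n-1}}$ inside the algebra generated by $A(n)$, $I_{q^n}$ and $J_{q^n}$ under these operations. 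I would attempt this by computing low Schur-and-matrix powers of $A(n)$ and solving a linear system for the coefficients; the subtlety is ensuring the resulting combination is genuinely $I_q\otimes J_{q^{n-1}}$ and not merely an element with the same spectrum, which is where the explicit eigenvector analysis of the first two steps becomes essential as a consistency check.
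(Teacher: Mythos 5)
Your handling of the eigenvectors $\yab$ lying in $V_0(n)$ and $V_1(n)$ follows the paper's route (every $\yab$ lies in some eigenspace $V_h(n)$ of $A(n)$ because $A(n)\in\nomw$, and the preceding lemma supplies the dichotomy on $V_1(n)$), but there are two problems. The first is a computational error: for an eigenvector of the first type, $\yab=(a_1\one_{q^{n-1}},\dots,a_q\one_{q^{n-1}})^T$ with $\sum_i a_i=0$, the $i$-th block of $(I_q\otimes J_{q^{n-1}})\yab$ is $a_i q^{n-1}\one_{q^{n-1}}$, so the eigenvalue is $q^{n-1}$, not $0$; the condition $\sum_i a_i=0$ is a sum across blocks, not within a block. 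The vector is still an eigenvector, which is all the lemma needs, but your assertion that $I_q\otimes J_{q^{n-1}}$ ``acts as $0$ on all of $V_1(n)$'' is false --- indeed the entire point of the preceding lemma is that $V_1(n)$ is \emph{not} contained in a single eigenspace of $I_q\otimes J_{q^{n-1}}$, and only the special vectors $\yab$ that happen to land in $V_1(n)$ are forced into one of its two eigenspaces.

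The second and more serious problem is your plan for $h\ge 2$. You propose to exhibit $I_q\otimes J_{q^{n-1}}$ inside the algebra generated by $A(n)$, $I_{q^n}$ and $J_{q^n}$ under matrix multiplication, Schur product and transpose. That algebra is exactly the Bose--Mesner algebra of the Hamming scheme $\cH(n,q)$ (the scheme is $P$-polynomial with respect to $A(n)$, and its Bose--Mesner algebra is already closed under Schur product and transpose), and $I_q\otimes J_{q^{n-1}}$ does not lie in it: its support, the set of pairs of words agreeing in the first coordinate, is not a union of Hamming distance classes. So no combination of Schur-and-matrix powers of $A(n)$ can produce it, and if it could, the lemma would be trivial and the preceding $V_1(n)$ analysis unnecessary. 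The case $h\ge 2$ is in fact the easy one and needs no new idea: by Lemma~\ref{LemEigenvectors}, $\yab[i]-\yab[j]\in V_{h-1}(n-1)$ and $\sum_i\yab[i]\in V_h(n-1)$, and both of these spaces are orthogonal to $\one_{q^{n-1}}$ when $h\ge 2$; hence $J_{q^{n-1}}(\yab[i]-\yab[j])=0$ and $\sum_i J_{q^{n-1}}\yab[i]=0$, which together give $J_{q^{n-1}}\yab[i]=0$ for every $i$, i.e.\ $\yab$ is an eigenvector with eigenvalue $0$.
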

\begin{proof}
Suppose $\yab \in V_0(n)$.  Then $\yab[i] \in \spn(\one_\qn1)$ for $i=1,\ldots,q$, and
$\yab$ is an eigenvector of $I_q \otimes J_\qn1$ 
belonging to the eigenvalue $\qn1$.

Suppose $\yab \in V_1(n)$.  By the previous lemma, either all of
$\yab[1],\ldots,\yab[q]$ lie in $\spn(\one_\qn1)$ or they all lie in $V_1(n-1)$.
In the former case, $\yab$ is an eigenvector of $I_q\otimes J_\qn1$ 
belonging to the eigenvalue $\qn1$.
In the latter case, $\yab$ is an eigenvector of $I_q\otimes J_\qn1$
belonging to the eigenvalue $0$.

Suppose $\yab \in V_h(n)$ for some $h > 1$.   It follows from (\ref{Eqn2}) that
\begin{equation*}
J_\qn1(\yab[i]-\yab[j])={\bf 0}
\end{equation*}
for all $1 \leq i,j\leq q$.
From (\ref{Eqn3}), we have
\begin{equation*}
\sum_{i=1}^q J_\qn1\yab[i]={\bf 0}.
\end{equation*}
These two equations give
\begin{equation*}
J_\qn1\yab[i]={\bf 0}
\qquad
\text{for $i=1,\ldots,q$.}
\end{equation*}
Therefore $\yab$ is an eigenvector of $I_q \otimes J_\qn1$ 
belonging to the eigenvalue $0$.
\end{proof}

\begin{theorem}
If $A(n) \in \nomw$, then $W$ is type~II equivalent to $W_1 \otimes \cdots \otimes W_n$ and 
\begin{equation*}
\nomw = \nom{W_1} \otimes \cdots\otimes \nom{W_n},
\end{equation*}
where $W_1, \ldots, W_n$ are $q\times q$ type~II matrices.
\label{ThmA_1N_w}
\end{theorem}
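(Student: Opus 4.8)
The plan is to induct on $n$, peeling off one $q\times q$ factor at each step via Theorem~\ref{ThmProduct}. The base case $n=1$ is vacuous: a $q\times q$ type~II matrix is already of the required form with $W_1=W$. So assume $n\geq 2$ and that the statement holds for $n-1$. The first task is to produce the two matrices needed to apply Theorem~\ref{ThmProduct} under the splitting $q^n=q\cdot\qn1$. By Lemma~\ref{LemmaA_1N_w} we already have $I_q\otimes J_\qn1\in\nomw$. Reading $A(n)$ in the block form of its recursion, the diagonal blocks are $A(n-1)$ and the off-diagonal blocks are $I_\qn1$, so Schur-multiplying against $I_q\otimes J_\qn1$ kills the off-diagonal blocks and leaves
\begin{equation*}
A(n)\circ(I_q\otimes J_\qn1)=I_q\otimes A(n-1).
\end{equation*}
Since $\nomw$ is closed under Schur product (Theorem~\ref{ThmNom}), $I_q\otimes A(n-1)\in\nomw$. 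As $\nomw$ is a linear space containing $A(n)$ and $I=I_q\otimes I_\qn1$, the combination
\begin{equation*}
A(n)-I_q\otimes A(n-1)+I_q\otimes I_\qn1=J_q\otimes I_\qn1
\end{equation*}
also lies in $\nomw$.

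With both $I_q\otimes J_\qn1$ and $J_q\otimes I_\qn1$ in $\nomw$, Theorem~\ref{ThmProduct} (taking $m=\qn1$) yields a $q\times q$ type~II matrix $U$ and a $\qn1\times\qn1$ type~II matrix $V$ such that $W$ is type~II equivalent to $U\otimes V$ and $\nomw=\nom U\otimes\nom V$. Next I would descend the hypothesis to $V$. The key observation is that every $M=\sum_k X_k\otimes Y_k\in\nom U\otimes\nom V$ has each diagonal block $M[s,s]=\sum_k X_k(s,s)\,Y_k$ lying in $\nom V$. Applying this to $I_q\otimes A(n-1)\in\nomw=\nom U\otimes\nom V$, whose $(1,1)$-block is exactly $A(n-1)$, gives $A(n-1)\in\nom V$. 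The inductive hypothesis then shows $V$ is type~II equivalent to $W_2\otimes\cdots\otimes W_n$ with $\nom V=\nom{W_2}\otimes\cdots\otimes\nom{W_n}$ for suitable $q\times q$ type~II matrices $W_2,\dots,W_n$.

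It remains to assemble the pieces with $W_1=U$. The algebra identity is immediate:
\begin{equation*}
\nomw=\nom U\otimes\nom V=\nom{W_1}\otimes\cdots\otimes\nom{W_n}.
\end{equation*}
For the equivalence, I would observe that type~II equivalence is preserved under Kronecker product with a fixed left factor: if $V=P_1D_1(W_2\otimes\cdots\otimes W_n)D_2P_2$, then tensoring on the left by $U$ replaces each $P_i$, $D_i$ by $I_q\otimes P_i$, $I_q\otimes D_i$, which are again permutation and invertible diagonal matrices, so $U\otimes V$ is type~II equivalent to $W_1\otimes\cdots\otimes W_n$. Combining this with $W\sim U\otimes V$ via transitivity of type~II equivalence completes the induction.

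I expect the descent step to require the most care. One must verify that the tensor factorization $\nomw=\nom U\otimes\nom V$ delivered by Theorem~\ref{ThmProduct} is taken with respect to the same $q\times\qn1$ block structure in which $I_q\otimes A(n-1)$ is written, so that reading off its $(1,1)$-block legitimately places $A(n-1)$ in $\nom V$; this is exactly the block decomposition used in Lemma~\ref{LemSuzuki} and Theorem~\ref{ThmProduct}, so the alignment holds, but it is the point where an otherwise routine argument could go wrong. Everything else is bookkeeping with Schur products, linear combinations in the Bose-Mesner algebra, and the compatibility of type~II equivalence with Kronecker products.
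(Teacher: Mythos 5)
Your proposal is correct and follows essentially the same route as the paper's proof: extract $I_q\otimes J_\qn1$ from Lemma~\ref{LemmaA_1N_w}, obtain $J_q\otimes I_\qn1$ by the same Schur-product and linear-combination identity, apply Theorem~\ref{ThmProduct}, pass $A(n-1)$ down to $\nom{V}$ via $I_q\otimes A(n-1)\in\nomw$, and induct. The paper leaves the descent and assembly steps implicit; your block-diagonal argument for $A(n-1)\in\nom V$ and the remark that the left factor in Theorem~\ref{ThmProduct} involves no permutation (so the block structure is preserved) are exactly the right justifications.
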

\begin{proof}
By Lemma~\ref{LemmaA_1N_w},
we have $I_q \otimes J_\qn1 \in \nomw$.  Then
\begin{equation*}
J_q\otimes I_\qn1
=A(n) - 
\left(A(n)\circ(I_q \otimes J_\qn1)\right)
+ I_{q^n} 
\end{equation*}
also belongs to $\nomw$.

Theorem~\ref{ThmProduct} tells us that $W$ is type~II equivalent to 
$W_1 \otimes V$ for some $q\times q$ type~II matrix 
$W_1$ and $\qn1\times \qn1$ type~II matrix $V$, and
\begin{equation*}
\nomw = \nom{W_1}\otimes \nom{V}.
\end{equation*}
Observe that 
\begin{equation*}
A(n)\circ \left(I_q\otimes J_\qn1\right) = I_q\otimes A(n-1) \in \nomw,
\end{equation*}
so $A(n-1) \in \nom{V}$.  The theorem follows by induction.
\end{proof}

We now give Theorem~1.3 of \cite{Suzuki} 
as an immediate consequence of this theorem and the fact that 
the unique $3\times 3$ type~II matrix, up to type~II equivalence, is
\begin{equation}
\label{EqnSize3}
\begin{pmatrix}
1 & 1 & 1\\
1 & \omega & \omega^2\\
1 & \omega^2 & \omega
\end{pmatrix},
\end{equation}
where $\omega$ is a primitive cube root of unity.
\begin{corollary}
If $\cH(n,3) \subseteq \nomw$, then $W$ is type~II equivalent to a character table of 
$\ints_3^n$.
\qed
\end{corollary}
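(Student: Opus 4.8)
The plan is to derive this as a direct consequence of Theorem~\ref{ThmA_1N_w} combined with the classification of $3\times 3$ type~II matrices. The first observation is that the hypothesis $\cH(n,3)\subseteq\nomw$ is in fact stronger than what we need: the adjacency matrix $A(n)$ of the Hamming graph $H(n,3)$ belongs to the Bose-Mesner algebra of $\cH(n,3)$, so the containment gives $A(n)\in\nomw$. This is precisely the hypothesis of Theorem~\ref{ThmA_1N_w} with $q=3$, which I would apply to conclude that $W$ is type~II equivalent to $W_1\otimes\cdots\otimes W_n$ for some $3\times 3$ type~II matrices $W_1,\ldots,W_n$.

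Next I would invoke the stated fact that, up to type~II equivalence, the matrix (\ref{EqnSize3}) is the only $3\times 3$ type~II matrix; call this matrix $C$, the character table of $\ints_3$. Hence each factor $W_i$ is type~II equivalent to $C$. It then remains to promote these factorwise equivalences to a single equivalence $W_1\otimes\cdots\otimes W_n \sim C^{\otimes n}$. Writing $W_i = P_i D_i\, C\, D_i' P_i'$ and taking Kronecker products, the mixed-product rule $(A\otimes B)(A'\otimes B')=(AA')\otimes(BB')$ lets me collect the factors as $(P_1\otimes\cdots\otimes P_n)(D_1\otimes\cdots\otimes D_n)\,C^{\otimes n}\,(D_1'\otimes\cdots\otimes D_n')(P_1'\otimes\cdots\otimes P_n')$; since a Kronecker product of permutation matrices is a permutation matrix and a Kronecker product of invertible diagonal matrices is invertible diagonal, this exhibits the required type~II equivalence.

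Finally, $C^{\otimes n}$ is a character table of $\ints_3^n$, because the character table of a direct product of finite abelian groups is the Kronecker product of the individual character tables. Chaining the equivalences $W\sim W_1\otimes\cdots\otimes W_n\sim C^{\otimes n}$ then gives the claim. The essential content has already been supplied by Theorem~\ref{ThmA_1N_w}; the only point demanding (routine) care is the bookkeeping showing that type~II equivalence is multiplicative across Kronecker factors, and I anticipate no genuine obstacle there.
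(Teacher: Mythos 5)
Your proposal is correct and follows essentially the same route as the paper, which presents this corollary as an immediate consequence of Theorem~\ref{ThmA_1N_w} together with the uniqueness (up to type~II equivalence) of the $3\times 3$ type~II matrix (\ref{EqnSize3}). The only difference is that you spell out the routine bookkeeping (compatibility of type~II equivalence with Kronecker products) that the paper leaves implicit.
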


\begin{theorem} 
Let $\cA$ be an association scheme on $q\geq 3$ vertices.
Then, for $n\geq 2$,
the Bose-Mesner algebra of $\cH(n,\cA)$ is not the Nomura algebra of
a type~II matrix.
\end{theorem}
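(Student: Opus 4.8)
The plan is to argue by contradiction. Suppose the Bose-Mesner algebra $\cB$ of $\cH(n,\cA)$ equals $\nomw$ for some type~II matrix $W$. The natural entry point is the remark following (\ref{EqnRecursion}) that the Hamming graph adjacency matrix $A(n)$ lies in $\cB$ for \emph{every} association scheme $\cA$ on $q$ vertices; hence $A(n)\in\nomw$. This is precisely the hypothesis of Theorem~\ref{ThmA_1N_w}, which then forces the factorization
\begin{equation*}
\nomw=\nom{W_1}\otimes\cdots\otimes\nom{W_n},
\end{equation*}
with each $W_i$ a $q\times q$ type~II matrix. Crucially, the tensor slots here are the same ones appearing in the recursion $A(n)=(J_q-I_q)\otimes I_\qn1+I_q\otimes A(n-1)$, and therefore the same ones permuted by $\cS_n$ in the definition of $\cH(n,\cA)$.

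Next I would exhibit a single matrix lying in the right-hand side that cannot lie in $\cB$. Each factor $\nom{W_i}$ is itself a Bose-Mesner algebra, so it contains $I_q$ and $J_q$, hence $J_q-I_q$. Consequently
\begin{equation*}
M=I_q\otimes\cdots\otimes I_q\otimes(J_q-I_q),
\end{equation*}
with $J_q-I_q$ in the last coordinate and $I_q$ in the others, belongs to $\nom{W_1}\otimes\cdots\otimes\nom{W_n}=\nomw$. On the other hand, $\cB$ consists exactly of the matrices in the span of $\cA^{\otimes n}$ that are fixed by every element of $\cS_n$. Since $n\geq 2$ and $q\geq 3$ (so that $J_q-I_q\neq I_q$), transposing the first and last coordinates carries $M$ to $(J_q-I_q)\otimes I_q\otimes\cdots\otimes I_q\neq M$; thus $M$ is not $\cS_n$-invariant and cannot lie in $\cB$. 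This contradicts $M\in\nomw=\cB$, and the theorem follows.

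I expect the only delicate point to be the bookkeeping in the first step: one must be certain that the factorization supplied by Theorem~\ref{ThmA_1N_w} is expressed in the very tensor structure carrying the $\cS_n$-symmetry of $\cH(n,\cA)$, and not in some independently chosen or permuted coordinatization. Tracing the proof of Theorem~\ref{ThmA_1N_w}---which peels off one $\cx^q$ factor at a time via Theorem~\ref{ThmProduct} and Lemma~\ref{LemmaTensor}, each step matched to the recursion for $A(n)$---confirms that the resulting factors occupy exactly the coordinates permuted by $\cS_n$. Once this alignment is secured the contradiction is immediate, the conceptual heart being simply that a nontrivial tensor factorization is incompatible with full $\cS_n$-symmetry when $n\geq 2$. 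A crude dimension count (for instance $\dim\nomw\geq 2^n$, since each $\dim\nom{W_i}\geq 2$) does not by itself settle the matter for arbitrary $\cA$, which is why exhibiting the explicit asymmetric element $M$ is the efficient route.
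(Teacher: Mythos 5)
Your proof is correct and takes essentially the same route as the paper: both deduce $A(n)\in\nomw$, invoke Theorem~\ref{ThmA_1N_w} to obtain $\nomw=\nom{W_1}\otimes\cdots\otimes\nom{W_n}$, and then exhibit a non-$\cS_n$-invariant element of this tensor product (the paper uses a Schur idempotent $A_1\neq I$ of $\nom{W_1}$ in the first slot where you use $J_q-I_q$ in the last). Your explicit verification of non-invariance and your remark about the coordinate alignment of the factorization merely spell out what the paper's one-line ``which is absurd'' leaves implicit.
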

\begin{proof}
Suppose that the Bose-Mesner algebra of $\cH(n,\cA)$ coincides
$\nomw$ for some type~II matrix $W$.
Since $A(n)$ belongs to the span of $\cH(n,\cA)$,
it follows from Theorem~\ref{ThmA_1N_w} that
\begin{equation*}
\nomw=\nom{W_1} \otimes \cdots \otimes \nom{W_n},
\end{equation*}
where $W_1, \ldots, W_n$ are $q\times q$ type~II matrices.
There exists a Schur idempotent $A_1\neq I$ of
$\cN_{W_1}$, and 
$A_1\otimes I\otimes\cdots\otimes I$
belongs to the association scheme defined by $\nomw$.
This forces
$A_1\otimes I\otimes\cdots\otimes I\in\cH(n,\cA)$
which is absurd.
\end{proof}

It is known that if $\cA$ is formally self-dual, then so is
$\cH(n,\cA)$ \cite{GodsilGenHam}.  The corollary gives plenty of examples
of formally self-dual association schemes that are not the Nomura algebras
of type~II matrices.

\begin{corollary}
The Bose-Mesner algebra of $\cH(n,q)$, $n\geq 2$ and $q\geq 3$, 
is not the Nomura algebra of a type~II matrix.
\qed
\end{corollary}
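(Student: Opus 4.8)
The plan is to deduce the corollary directly from the theorem just proved, by recognizing the Hamming scheme as the simplest instance of a generalized Hamming scheme. Recall from Section~\ref{SectionGenHam} that $\cH(n,q)$ is by definition $\cH(n,\cA)$ when $\cA$ is the trivial association scheme $\{I_q,J_q-I_q\}$ on $q$ vertices. Since the corollary assumes $q\geq 3$, this $\cA$ is an association scheme on $q\geq 3$ vertices, so it satisfies the hypothesis of the preceding theorem. Applying that theorem with this choice of $\cA$ (and with $n\geq 2$) then asserts exactly that the Bose-Mesner algebra of $\cH(n,q)$ is not the Nomura algebra of any type~II matrix. This is the entire argument; no new estimate or construction is needed.

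For the reader's benefit I would spell out what the theorem's mechanism becomes in this special case, since it is worth seeing concretely. Suppose, for contradiction, that the Bose-Mesner algebra of $\cH(n,q)$ equals $\nomw$ for some type~II matrix $W$. The adjacency matrix $A(n)$ of $H(n,q)$ lies in $\cH(n,q)$ by (\ref{EqnRecursion}), so $A(n)\in\nomw$, and Theorem~\ref{ThmA_1N_w} gives the factorization $\nomw=\nom{W_1}\otimes\cdots\otimes\nom{W_n}$ with each $W_i$ a $q\times q$ type~II matrix. Because $\dim\nom{W_1}\geq 2$, the association scheme underlying $\nom{W_1}$ has a Schur idempotent $A_1\neq I_q$, and then $A_1\otimes I_q\otimes\cdots\otimes I_q$ is a Schur idempotent of $\nomw$, hence one of the matrices of the scheme $\cH(n,q)$.

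The point of contradiction, which is where all the content sits, is that $\cH(n,q)$ consists of those matrices in the span of $\cA^{\otimes n}$ that are fixed by the coordinate-permuting action of $\cS_n$. The matrix $A_1\otimes I_q\otimes\cdots\otimes I_q$ singles out the first coordinate, so the transposition swapping coordinates $1$ and $2$ sends it to $I_q\otimes A_1\otimes I_q\otimes\cdots\otimes I_q\neq A_1\otimes I_q\otimes\cdots\otimes I_q$; thus it is not $\cS_n$-fixed and cannot belong to $\cH(n,q)$, a contradiction. Honestly, there is no real obstacle at the level of the corollary: the difficulty has already been discharged inside Theorem~\ref{ThmA_1N_w} (the factorization of $\nomw$, established through the eigenvector analysis culminating in Lemma~\ref{LemmaA_1N_w}) and inside the preceding theorem. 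The only thing left to verify for the corollary itself is the harmless fact that the trivial association scheme on $q\geq 3$ vertices is a legitimate association scheme, which it plainly is.
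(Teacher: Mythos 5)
Your proof is correct and matches the paper's: the corollary is stated with only a \qed precisely because it is the special case of the preceding theorem with $\cA$ the trivial association scheme on $q$ vertices, which is exactly how you argue. Your concrete elaboration of the final contradiction (non-invariance under the coordinate-permuting $\cS_n$-action) is a faithful unpacking of the paper's ``which is absurd,'' not a different route.
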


When $n=1$, $\cH(1,q)$ is the trivial scheme on $q$ vertices.
It follows from Theorem~6.4 of \cite{TypeIIComb} that
the Nomura algebra of the
Potts model of size $q$, for $q\geq 5$, is trivial.
The Nomura algebra of the $3\times 3$ type~II matrix in (\ref{EqnSize3})
has dimension three.  
The Nomura algebra of a $4\times 4$
type~II matrix has dimension at least three \cite{MR99f:05125}.
So the Bose-Mesner algebra of $\cH(1,q)$ is the Nomura algebra of
a type~II matrix exactly when $q\geq 5$.

The Bose-Mesner algebra of $\cH(2,2)$ is the Nomura algebra of 
\begin{equation*}
\begin{pmatrix}
1 & 1 & 1 & 1\\
1 & 1 & -1 & -1\\
1 & -1 & \alpha& -\alpha\\
1 & -1 & -\alpha & \alpha
\end{pmatrix}
\end{equation*}
when $\alpha$ is not a fourth root of unity
\cite{MR99f:05125}.

\section*{Acknowledgement}
The authors would like to thank Hiroshi Suzuki for bringing our attention 
to this problem and
his result for $q=3$ in \cite{Suzuki}, and many useful discussions.
\bibliography{Hamming}

\begin{thebibliography}{10}

\bibitem{MR1002568}
{\sc Brouwer, A.~E., Cohen, A.~M., and Neumaier, A.}
\newblock {\em Distance-regular graphs}.
\newblock Springer-Verlag, Berlin, 1989.

\bibitem{TypeIIComb}
{\sc Chan, A., and Godsil, C.}
\newblock Type {II} matrices and combinatorial structures.
\newblock {\em Combinatorica 30}, 1 (2010), 1--24.

\bibitem{MR1826951}
{\sc Curtin, B., and Nomura, K.}
\newblock Spin models and strongly hyper-self-dual {B}ose-{M}esner algebras.
\newblock {\em J. Algebraic Combin. 13}, 2 (2001), 173--186.

\bibitem{MR0384310}
{\sc Delsarte, P.}
\newblock An algebraic approach to the association schemes of coding theory.
\newblock {\em Philips Res. Rep. Suppl.}, 10 (1973), vi+97.

\bibitem{GodsilGenHam}
{\sc Godsil, C.}
\newblock Generalized {H}amming schemes.
\newblock {\sl Research Report CORR98-20}, Department of Combinatorics and
  Optimization, University of Waterloo, 1998.

\bibitem{MR1220704}
{\sc Godsil, C.}
\newblock {\em Algebraic combinatorics}.
\newblock Chapman and Hall Mathematics Series. Chapman \& Hall, New York, 1993.

\bibitem{MR1958007}
{\sc Hosoya, R., and Suzuki, H.}
\newblock Type {II} matrices and their {B}ose-{M}esner algebras.
\newblock {\em J. Algebraic Combin. 17}, 1 (2003), 19--37.

\bibitem{MR99f:05125}
{\sc Jaeger, F., Matsumoto, M., and Nomura, K.}
\newblock Bose-{M}esner algebras related to type {I}{I} matrices and spin
  models.
\newblock {\em J. Algebraic Combin. 8}, 1 (1998), 39--72.

\bibitem{MR89m:57005}
{\sc Jones, V. F.~R.}
\newblock On knot invariants related to some statistical mechanical models.
\newblock {\em Pacific J. Math. 137}, 2 (1989), 311--334.

\bibitem{MR98g:05158}
{\sc Nomura, K.}
\newblock An algebra associated with a spin model.
\newblock {\em J. Algebraic Combin. 6}, 1 (1997), 53--58.

\bibitem{Suzuki}
{\sc Suzuki, H.}
\newblock Type {II} matrices of {H}amming schemes.
\newblock Preprint, 2002.

\end{thebibliography}
\bibliographystyle{acm}

\end{document}